\newcommand{\rank}{\hbox{rank}}
\newcommand{\norm}[1]{\left\Vert#1\right\Vert}
\renewcommand*\env@matrix[1][c]{\hskip -\arraycolsep
  \let\@ifnextchar\new@ifnextchar
  \array{*\c@MaxMatrixCols #1}}
\begin{document}

\title*{1-norm minimization and minimum-rank structured sparsity
for symmetric and ah-symmetric  generalized inverses: rank one and two}
\titlerunning{Approximate 1-norm minimization and minimum-rank structured sparsity}
\author{Luze Xu, Marcia Fampa, Jon Lee}
\institute{
Luze Xu \at IOE Dept., Univ. of Michigan, Ann Arbor, MI, USA. \email{xuluze@umich.edu}\\[3pt]
Marcia Fampa \at Universidade Federal do Rio de Janeiro, Brazil. \email{fampa@cos.ufrj.br}\\[3pt]
Jon Lee \at IOE Dept., Univ. of Michigan, Ann Arbor, MI, USA. \email{jonxlee@umich.edu}
}

\date{\today}

\maketitle

\abstract{
Generalized inverses are important in statistics and other areas of applied matrix algebra. 
A \emph{generalized inverse} of a real matrix $A$
is a matrix $H$  that satisfies the Moore-Penrose (M-P) property
$AHA=A$. If $H$ also satisfies the  M-P property $HAH=H$, then it is
called \emph{reflexive}. Reflexivity of a generalized inverse is equivalent to minimum rank, a highly desirable property.
We consider aspects of symmetry related to
the calculation of various \emph{sparse} reflexive generalized inverses of $A$.
As is common, we use (vector) 1-norm minimization for both inducing sparsity and for keeping the magnitude
of entries under control.
\newline
\phantom{a}\quad  When $A$ is symmetric,  a symmetric $H$ is highly desirable, 
but generally such a restriction
on $H$ will not lead to a 1-norm minimizing reflexive generalized inverse.
We investigate a block construction method to produce a symmetric reflexive generalized inverse that is structured and has guaranteed sparsity.
Letting  the rank of $A$ be $r$, we establish that the 1-norm minimizing 
generalized inverse of this type is a
1-norm minimizing symmetric generalized inverse when (i) $r=1$ and when (ii) $r=2$ and $A$ is nonnegative. 
\newline
\phantom{a}\quad
Another aspect of symmetry that we consider relates to another M-P property:
$H$ is \emph{ah-symmetric} if $AH$ is symmetric. The ah-symmetry property
is sufficient for a generalized inverse to be used to 
 solve the least-squares problem $\min\{\|Ax-b\|_2:~x\in\mathbb{R}^n\}$ using $H$,
 via $x:=Hb$.
We investigate a column
block construction method to produce an ah-symmetric  reflexive generalized inverse
that is structured and has guaranteed sparsity.
We establish that the 1-norm minimizing  ah-symmetric
generalized inverse of this type is a 1-norm minimizing ah-symmetric generalized inverse when (i) $r=1$ and
when (ii) $r=2$ and $A$ satisfies a technical condition.
}

\section{Introduction}\label{sec: intro}

Generalized inverses are essential tools in statistics and 
other applications of matrix algebra.
Of central importance is the celebrated Moore-Penrose (M-P) pseudoinverse,
which can be used, for example, to calculate the least-squares solution of an over-determined system of linear equations.
With respect to  the system  $Ax=b$,  the least-squares solution $\min\{\|Ax-b\|_2:~x\in\mathbb{R}^n\}$
is given by $x:=A^+b$, where $A^+$ is the M-P pseudoinverse.
Our motivating \emph{use case} is that we have a very large (rank deficient) matrix $A$
and multiple right-hand sides $b$. And so we can  see the value of having a sparse
generalized inverse.

In what follows,  we write $\|H\|_1$ to mean $\|\mathrm{vec}(H)\|_1$ and $\|H\|_{\max}$ to mean $\|\mathrm{vec}(H)\|_{\max}$ (in both cases, these are not the usual induced/operator matrix norms). We use $I$ for an identity matrix and $J$ for an all-ones matrix. Matrix dot product is indicated by $\langle X, Y\rangle:=\sum_{ij}x_{ij}y_{ij}=\mathrm{trace}(X^\top Y)$. We use $A[S,T]$ for the submatrix of $A$ with row indices $S$ and column indices $T$; additionally, we use $A[S,:]$ ( resp., $A[:,T]$) for the submatrix of $A$ formed by the rows $S$ (resp., columns $T$).
Finally, if $A$ is symmetric, we let $A[S]:=A[S,S]$,
the principal submatrix of $A$ with  row/column indices $S$.

When a real matrix $A\in\mathbb{R}^{m\times n}$ is not square or is square but not invertible, we consider ``pseudoinverses'' of $A$ (see \cite{rao1971}). The most well-known pseudoinverse is the \emph{M-P pseudoinverse}
independently discovered by A. Bjerhammar, E.H. Moore and R. Penrose
(see \cite{Bjerhammar,Moore,Penrose}). If $A=U\Sigma V^\top$ is the real singular-value decomposition of $A$ (see \cite{GVL1996}, for example), where $U\in\mathbb{R}^{m\times m}$, $V\in\mathbb{R}^{n\times n}$ are orthogonal matrices and $\Sigma=\mathrm{diag}(\sigma_1,\sigma_2,\dots,\sigma_p)\in\mathbb{R}^{m\times n}$ ($p=\min\{m,n\}$) with  singular values $\sigma_1\ge\sigma_2\ge\dots\ge\sigma_p\ge0$, then the M-P pseudoinverse of $A$ can be defined as $A^+:=V\Sigma^+U^\top$, where $\Sigma^+:=\mathrm{diag}(\sigma_1^+,\sigma_2^+,\dots,\sigma_p^+)\in\mathbb{R}^{n\times m}$, $\sigma_i^+:=1/\sigma_i$ for all $\sigma_i\ne 0$, and $\sigma_i^+:=0$ for all $\sigma_i=0$. 
The starting point for our investigation is the following celebrated result. 

\begin{theorem}[see \cite{Penrose}]
For $A \in \mathbb{R}^{m \times n}$, the M-P pseudoinverse $A^+$ is the unique $H \in \mathbb{R}^{n \times m}$ satisfying:
	\begin{align}
		& AHA = A \label{property1} \tag{P1}\\
		& HAH = H \label{property2} \tag{P2}\\
		& (AH)^{\top} = AH \label{property3} \tag{P3}\\
		& (HA)^{\top} = HA \label{property4} \tag{P4}
	\end{align}
\end{theorem}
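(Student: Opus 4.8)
The plan is to prove \emph{existence} and \emph{uniqueness} separately. For existence, I would verify directly that the matrix $A^+ := V\Sigma^+ U^\top$ defined above via the SVD satisfies all four properties. Because $U$ and $V$ are orthogonal, each of the four products strips the orthogonal factors away and collapses to a purely diagonal identity. Writing $AA^+ = U\Sigma\Sigma^+ U^\top$ and $A^+A = V\Sigma^+\Sigma V^\top$, properties (P3) and (P4) follow at once from the fact that $\Sigma\Sigma^+$ and $\Sigma^+\Sigma$ are diagonal (hence symmetric) $0/1$ matrices, while (P1) and (P2) reduce to the scalar identities $\sigma_i\sigma_i^+\sigma_i = \sigma_i$ and $\sigma_i^+\sigma_i\sigma_i^+ = \sigma_i^+$, both immediate from the definition of $\sigma_i^+$. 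This part is routine once orthogonality of $U,V$ is exploited.

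For uniqueness, suppose $G$ and $H$ both satisfy (P1)--(P4); I would show $G=H$ by a chain of substitutions. The first goal is the one-sided equality $AG = AH$. Using symmetry of $AG$ (from P3 for $G$), then $A = AHA$ (from P1 for $H$), then reassembling,
\[
AG = (AG)^\top = G^\top A^\top = G^\top(AHA)^\top = (AG)^\top(AH)^\top = (AG)(AH),
\]
where the last step uses symmetry of $AG$ and $AH$. Transposing the ends gives $AG = (AH)(AG)$, and the mirror-image computation (swapping the roles of $G$ and $H$) gives $AH = (AH)(AG)$; comparing the two yields $AG = AH$. An entirely parallel argument on the other side, driven by P4 and P1 for $G$, gives $GA = HA$.

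Finally, combining these two one-sided equalities with P2 collapses everything:
\[
G = GAG = G(AG) = G(AH) = (GA)H = (HA)H = HAH = H.
\]
The main obstacle is purely organizational rather than deep: one must choose the order in which (P1)--(P4) are applied so that every transpose taken is legitimate and the two one-sided equalities $AG=AH$ and $GA=HA$ fall out cleanly \emph{before} the final collapse. The recurring subtlety is to invoke each property for the \emph{correct} matrix ($G$ versus $H$) at each stage; once the substitution chains are arranged with that bookkeeping in mind, no individual step requires more than a symmetry or a single M-P substitution.
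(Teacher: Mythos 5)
Your proposal is correct, but there is nothing in the paper to compare it against: the paper states this theorem as a cited classical result (attributed to Penrose) and gives no proof of it, relying instead on the SVD-based definition of $A^+$ given just above the theorem statement. Judged on its own merits, your argument is sound and complete. The existence half is exactly the right verification for the paper's setting, since the paper \emph{defines} $A^+ := V\Sigma^+U^\top$ via the SVD; your reduction of (P1)--(P2) to the scalar identities $\sigma_i\sigma_i^+\sigma_i=\sigma_i$ and $\sigma_i^+\sigma_i\sigma_i^+=\sigma_i^+$, and of (P3)--(P4) to symmetry of the diagonal $0/1$ matrices $\Sigma\Sigma^+$ and $\Sigma^+\Sigma$, is airtight. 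The uniqueness half is the standard Penrose-style substitution argument, and your bookkeeping checks out: the chain
\[
AG = (AG)^\top = G^\top A^\top = G^\top(AHA)^\top = (AG)^\top(AH)^\top = (AG)(AH)
\]
uses only (P3) for $G$ and $H$ and (P1) for $H$; transposing and comparing with the mirror identity $AH=(AH)(AG)$ indeed forces $AG=AH$, the parallel argument with (P4) forces $GA=HA$, and the final collapse $G = GAG = G(AH) = (HA)H = H$ uses (P2) for both matrices legitimately. One stylistic remark: Penrose's original 1955 proof did not use the SVD (he established existence by a different construction), so your existence argument is the modern streamlined route; it is also the natural one here given how the paper introduces $A^+$.
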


Following \cite{RohdeThesis}, a \emph{generalized inverse} is any $H$ satisfying  \ref{property1}. 
Because we are interested in sparse $H$, \ref{property1} is important to enforce, otherwise the completely sparse zero-matrix
(which carries no information from $A$) always satisfies
the other three M-P properties. 
A generalized inverse is \emph{reflexive} if it satisfies \ref{property2}.  Two very useful facts
are: (i) if $H$ is a generalized inverse of $A$, then $\mathrm{rank}(H)\ge\mathrm{rank}(A)$, and (ii) a generalized inverse $H$ of $A$ is reflexive if and only if $\mathrm{rank}(H)=\mathrm{rank}(A)$
(see  \cite[Theorem 3.14]{RohdeThesis}). 
Therefore, enforcing \ref{property2} for a generalized inverse
implies that the generalized inverse has minimum rank.
A low-rank $H$ can be viewed as being more interpretable/explainable model 
(say in the context of
the least-squares problem), so we naturally prefer reflexive generalized inverses (which have the least rank possible among generalized inverses).
As we have said, we are interested in
sparse generalized inverses. But structured sparsity of $H$ is even more valuable, as
it can be viewed, in a different way, as being a more interpretable/explainable model.

Following \cite{XFLP}, if $H$ satisfies \ref{property3}, we say that $H$ is \emph{ah-symmetric}.
That is, ah-symmetric  
means that $AH$ is symmetric.
It is very important to know that not  all of the M-P properties are required for a generalized inverse to exactly solve key problems. For example, if $H$ is any ah-symmetric generalized inverse, then $\hat{x}:=Hb$ solves the least-squares problem $\min\{\|Ax-b\|_2:~x\in\mathbb{R}^n\}$ (see \cite{campbell2009generalized} or \cite{FFL2016}, for example).

{\bf Previous work:}
\cite{dokmanic,dokmanic1,dokmanic2} used sparse-optimization techniques to give tractable right and left sparse pseudoinverses.
\cite{FFL2016} introduced the idea of sparse generalized inverses obtained by 1-norm minimization over convex
relaxations of the M-P properties (also see \cite{FFL2019}).
\cite{FampaLee2018ORL} introduced ``block soultions'' giving reflexive generalized inverses
with structured sparsity. They solved the 1-norm minimization problem for the rank-1 case and the rank-2 nonnegative case,
and they gave an efficient approximation algorithm for the general-rank case. 
For symmetric reflexive generalized inverses, \cite{XFLP}  gave a 1-norm 
approximation algorithm for the general-rank case, based on a local search over symmetric block solutions.
Additionally, \cite{XFLP} introduced ``column block solutions'' 
giving reflexive ah-symmetric generalized inverses
with structured sparsity. Furthermore, 
they gave an efficient 1-norm approximation algorithm for the general-rank case,  
based on a local search over column block solutions.
\cite{FLPX} makes a detailed computational study of all these approximation 
algorithms (also see \cite{XFLP}).

The approximation algorithms aimed at minimizing  $\|H\|_1$ from \cite{FampaLee2018ORL} and \cite{XFLP}
(over $H$ satisfying \ref{property1}+\ref{property2}, or \ref{property1}+\ref{property2}+\,$H$\!=\!$H^\top$, or \ref{property1}+\ref{property2}+\ref{property3})
carry out local searches (using appropriate local-search neighborhoods, over, respectively, block solutions, symmetric block solutions, or column block solutions --- all reflexive and structured), 
but not with $\|H\|_1$ as a 
local-minimization criterion. 
In fact, in  \cite{XFLP} it was demonstrated that
such a criterion can fail to provide a decent approximation ratio for $r:=\rank(A)\ge 2$. 
 However, the global minimizer with respect to the criterion $\|H\|_1$ might 
 provide a better approximation ratio than the local search.
If $r$ is very small, we could calculate a globally-minimum $\|H\|_1$ (over block solutions, symmetric block solutions, or column block solutions, respectively) and hope that such a  solution 
could be a 1-norm minimizing solution (satisfying \ref{property1}+\ref{property2}, or \ref{property1}+\ref{property2}+\,$H$\!=\!$H^\top$, or \ref{property1}+\ref{property2}+\ref{property3}, respectively).
In fact, \cite{FampaLee2018ORL} demonstrated this for the case of \ref{property1}+\ref{property2},
when $r=1$ and when $r=2$ for nonnegative $A$.

{\bf Our main contributions:} In \S\ref{sec:sym},  (i) we solve the 1-norm minimization problem 
for \ref{property1}+\ref{property2}+\,$H$\!=\!$H^\top$ (i.e.,  symmetric reflexive generalized inverses), in 
the rank-1 case and the rank-2 nonnegative case.
In \S\ref{sec:ah-sym}, 
(ii) we solve the 1-norm minimization problem 
for \ref{property1}+\ref{property2}+\ref{property3} (i.e., 
ah-symmetric reflexive generalized inverses), in 
the rank-1 case and the rank-2 case under
an efficiently-checkable technical condition, and 
(iii) we demonstrate that this technical condition is
essentially necessary. 

Already, the rank-2 cases need side conditions and have rather complicated proofs. 
So some new insights will be needed to characterize optimality beyond
the cases that we handle. Nevertheless, we believe that our results can
be stepping stones to characterizing optimal solutions or getting
better approximation ratios in further low-rank cases. 

In what follows, a key tool that we employ is linear-optimization duality.
Even though we are interested in \emph{reflexive} generalized inverses,
it is useful to consider 
 $\min\{\|H\|_1: \ref{property1}\}=\min\{\norm{H}_1: AHA=A\}$, which we re-cast as a linear-optimization problem \eqref{eqn:P} and its dual \eqref{eqn:D}:
\begin{equation*}\label{eqn:P}\tag{P}
\begin{array}{ll}
\mbox{minimize }& \langle J, H^+\rangle + \langle J, H^-\rangle\\
\mbox{subject to} & A(H^+ - H^-)A=A,\\
& H^+,H^-\ge 0;
\end{array}
\end{equation*}
\begin{equation*}\label{eqn:D}\tag{D}
\begin{array}{ll}
\mbox{maximize }& \langle A, W\rangle\\
\mbox{subject to} & -J\le A^\top W A^\top\le J.\\
\end{array}
\end{equation*}
More compactly, we can recast \eqref{eqn:D} as: $\max\{\langle A,W\rangle:~\|A^\top WA^\top\|_{\max}\le 1\}$.

\section{Symmetric}\label{sec:sym}

Our starting point is the construction of \emph{symmetric block solutions} via the following result.

\begin{theorem}[\cite{XFLP}]\label{thm:symconstruction}
For a symmetric matrix $A\in\mathbb{R}^{n\times n}$, let $r := \rank(A)$. Let $\tilde{A}:=A[S]$ be any $r \times r$
nonsingular principal submatrix of $A$. Let $H\in\mathbb{R}^{n\times n}$ be equal to zero, except its submatrix with row/column indices $S$ is equal to $\tilde{A}^{-1}$. Then $H$ is a symmetric reflexive generalized inverse of $A$.
\end{theorem}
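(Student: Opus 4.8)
The plan is to verify the three required properties in turn: that $H$ is symmetric, that it satisfies \ref{property1} ($AHA=A$), and that it satisfies \ref{property2} ($HAH=H$). After a simultaneous symmetric row/column permutation $X\mapsto PXP^\top$ by a permutation matrix $P$ — which preserves symmetry and rank and, because $P^\top P=I$, carries $AHA-A$, $HAH-H$, and the symmetry of $H$ to the corresponding quantities of the permuted pair — I may assume $S=\{1,\dots,r\}$ and write $A$ and $H$ in conformal block form
\begin{equation*}
A=\begin{pmatrix}\tilde A & B\\ B^\top & C\end{pmatrix},\qquad H=\begin{pmatrix}\tilde A^{-1} & 0\\ 0 & 0\end{pmatrix},
\end{equation*}
where $\tilde A=A[S]$ is $r\times r$ and nonsingular, $B\in\mathbb{R}^{r\times(n-r)}$, and $C\in\mathbb{R}^{(n-r)\times(n-r)}$.

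The crux is a structural identity forced by the rank hypothesis, and this is the step I expect to do the real work. Since $\rank(A)=r$ and the leading $r$ rows $\begin{pmatrix}\tilde A & B\end{pmatrix}$ are linearly independent (as $\tilde A$ is nonsingular), each of the remaining $n-r$ rows must be a linear combination of them; that is, $\begin{pmatrix}B^\top & C\end{pmatrix}=M\begin{pmatrix}\tilde A & B\end{pmatrix}$ for some $M\in\mathbb{R}^{(n-r)\times r}$. Reading off the first block column gives $B^\top=M\tilde A$, hence $M=B^\top\tilde A^{-1}$, and reading off the second block column gives the Schur-complement relation $C=MB=B^\top\tilde A^{-1}B$. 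This is precisely where the nonsingularity of $\tilde A$ and the rank-$r$ assumption are used.

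With this identity in hand the remaining checks are direct block multiplications. Computing $HA=\left(\begin{smallmatrix}I & \tilde A^{-1}B\\ 0 & 0\end{smallmatrix}\right)$ and then
\begin{equation*}
AHA=\begin{pmatrix}\tilde A & B\\ B^\top & C\end{pmatrix}\begin{pmatrix}I & \tilde A^{-1}B\\ 0 & 0\end{pmatrix}=\begin{pmatrix}\tilde A & B\\ B^\top & B^\top\tilde A^{-1}B\end{pmatrix},
\end{equation*}
the bottom-right block equals $C$ by the Schur relation, so $AHA=A$, establishing \ref{property1}. Likewise $HAH=\left(\begin{smallmatrix}I & \tilde A^{-1}B\\ 0 & 0\end{smallmatrix}\right)\left(\begin{smallmatrix}\tilde A^{-1} & 0\\ 0 & 0\end{smallmatrix}\right)=H$, giving \ref{property2}; alternatively, reflexivity follows immediately from $\rank(H)=\rank(\tilde A^{-1})=r=\rank(A)$ together with the rank characterization cited from \cite[Theorem 3.14]{RohdeThesis}. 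Finally, symmetry of $H$ is immediate: $A$ symmetric makes $\tilde A=A[S]$ symmetric, so $\tilde A^{-1}$ is symmetric and hence so is $H$. Since none of the three properties is affected by the initial permutation, the conclusion transfers back to the original $A$ and $H$.
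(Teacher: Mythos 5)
Your proposal is correct. Note that the paper itself gives no proof of this theorem --- it is imported from \cite{XFLP} as a known construction --- so there is nothing in this document to compare against line by line; but your argument is essentially the standard one used for such block solutions: the rank hypothesis together with nonsingularity of $\tilde{A}$ forces the Schur-complement identity $C=B^\top\tilde{A}^{-1}B$, after which \ref{property1}, \ref{property2}, and symmetry follow by direct block multiplication (your alternative route to reflexivity via $\rank(H)=\rank(A)$ and \cite[Theorem 3.14]{RohdeThesis} is equally valid).
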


\subsection{Rank 1}
Next, we demonstrate that when $\rank(A) = 1$, construction
of a $1$-norm minimizing symmetric reflexive generalized inverse can be based on the symmetric block construction over the diagonal elements of $A$.
\begin{theorem}
Let $A$ be an arbitrary rank-1 symmetric matrix, which is, without loss of generality, of the form $A:=uu^\top$, where $\mathbf{0}\ne u\in\mathbb{R}^n$. If $i^*:=\arg\max_{i}\{|u_i|\}=\arg\max_{i}\{|a_{ii}|\}$, then $H:=\frac{1}{u_{i^*}^2}e_{i^*} e_{i^*}^\top$,
where $e_{i^*}\in\mathbb{R}^n$ is a standard unit vector, is a symmetric reflexive generalized inverse of $A$ with minimum 1-norm.
\end{theorem}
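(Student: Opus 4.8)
The plan is to split the claim into two independent pieces: first verify that $H$ is a symmetric reflexive generalized inverse, and then certify that it attains the minimum $1$-norm by exhibiting a matching dual solution via linear-optimization duality. The feasibility piece is essentially free: I would invoke \cref{thm:symconstruction} with the singleton index set $S=\{i^*\}$. The principal submatrix $\tilde A = A[i^*] = a_{i^*i^*} = u_{i^*}^2$ is $1\times 1$ and nonsingular (since $u\neq\mathbf{0}$), and placing $\tilde A^{-1} = 1/u_{i^*}^2$ into position $(i^*,i^*)$ reproduces exactly $H=\frac{1}{u_{i^*}^2}e_{i^*}e_{i^*}^\top$. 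Hence $H$ is automatically a symmetric reflexive generalized inverse, and, having a single nonzero (positive) entry, it has $\|H\|_1 = 1/u_{i^*}^2$.

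For optimality the key observation is that it suffices to show $H$ minimizes $\|\cdot\|_1$ over all matrices satisfying merely \ref{property1} --- a relaxation that drops both symmetry and reflexivity. Since $H$ itself lies in the smaller class of symmetric reflexive generalized inverses, optimality over the larger relaxation a fortiori forces optimality over the smaller class. I would therefore work directly with the linear program \eqref{eqn:P} and its dual \eqref{eqn:D}. Because $A=A^\top=uu^\top$ has rank one, the dual constraint collapses: $A^\top W A^\top = (u^\top W u)\,uu^\top$ is a scalar multiple of $uu^\top$, and the dual objective is $\langle A,W\rangle = u^\top W u$, a single scalar.

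With this reduction in hand, I would take the rank-one dual certificate $W := \frac{1}{u_{i^*}^4}\,e_{i^*}e_{i^*}^\top$, for which $u^\top W u = 1/u_{i^*}^2$. Then $A^\top W A^\top = \frac{1}{u_{i^*}^2}\,uu^\top$ has entries $u_iu_j/u_{i^*}^2$, and here the hypothesis $i^*=\arg\max_i|u_i|$ is exactly what yields $\max_{i,j}|u_iu_j| = u_{i^*}^2$, so that $\|A^\top W A^\top\|_{\max}=1$ and $W$ is dual feasible. Its objective value $\langle A,W\rangle = 1/u_{i^*}^2$ matches $\|H\|_1$, and since $H$ is feasible for \eqref{eqn:P} with the same objective, weak duality closes the gap and certifies that $H$ is a $1$-norm minimizer.

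The computation is genuinely short once the reduction is set up, so I expect the only real subtlety to be conceptual rather than technical: recognizing that the correct comparison class is the full relaxation \eqref{eqn:P} rather than the symmetric reflexive inverses directly, and seeing that the \emph{argmax} definition of $i^*$ is precisely the condition needed to saturate the max-norm constraint in \eqref{eqn:D}. Any other index would leave the constraint slack and fail to produce a tight certificate, which is the intuition for why the diagonal block over the largest $|u_i|$ is the optimal choice.
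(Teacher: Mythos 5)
Your proposal is correct and follows essentially the same route as the paper: both verify feasibility via the symmetric block construction and then certify optimality over the full relaxation \eqref{eqn:P} using the identical dual certificate $W=\frac{1}{u_{i^*}^4}e_{i^*}e_{i^*}^\top$ in \eqref{eqn:D}, closing the gap by weak duality. Your additional remarks (the rank-one collapse of the dual constraint and the role of the argmax choice of $i^*$ in making the certificate tight) are just a more explicit account of the same computation the paper performs.
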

\begin{proof}
We consider \eqref{eqn:P} and \eqref{eqn:D}.
A feasible solution for \eqref{eqn:P} is $H^+=\frac{1}{u_{i^*}^2}e_{i^*} e_{i^*}^\top$, $H^-=\mathbf{0}$.
A feasible solution for \eqref{eqn:D} is $W=\frac{1}{u_{i^*}^4}e_{i^*} e_{i^*}^\top$~, because $\norm{A^\top W A^\top}_{\max} =\frac{1}{u_{i^*}^2}\norm{A}_{\max}=1$.
And the objective value of the dual solution is $\langle A,W\rangle=u_{i^*}^2\cdot \frac{1}{u_{i^*}^4} = 1/u_{i^*}^2$, which is the objective value of the primal solution. Therefore, by the weak-duality theorem of linear optimization, we have that $H:=H^+-H^-$ is a generalized inverse of $A$ with minimum $1$-norm. By our construction, $H$ is symmetric and reflexive. Therefore, $H$ is a symmetric reflexive generalized inverse with minimum 1-norm. \qed
\end{proof}
\smallskip

Another way to view the rank-1 case is by using the Kronecker product to transform the constraint $AHA=A$ into $[A^\top \otimes A]\mathrm{vec}(H)=\mathrm{vec}(A)$. Note that $\mathrm{vec}(A)=\mathrm{vec}(u u^\top)=u\otimes u$, and $A^\top \otimes A= u u^\top \otimes u u^\top = [u\otimes u][u^\top \otimes u^\top]$. So the constraint becomes
$
[u\otimes u]\left([u^\top \otimes u^\top]\mathrm{vec}(H)\right)=u\otimes u~\Leftrightarrow~ [u \otimes u]^\top \mathrm{vec}(H)=1.
$
Thus the $1$-norm minimization may be re-cast as $\min\{\norm{\mathrm{vec}(H)}_1:~[u \otimes u]^\top \mathrm{vec}(H)=1\}$, or
$\min\{\norm{H}_1:~u^\top Hu=1\}$, or $\min\{\norm{H}_1:~\langle uu^\top , H\rangle =1\}$. By using the inequality $x^\top y\le \norm{x}_{\infty}\norm{y}_1$ or $\langle X, Y\rangle\le \norm{X}_{\max}\norm{Y}_1$, we have $\norm{H}_1\ge 1/\norm{u u^\top}_{\max}$, and the equality holds when $H=\frac{1}{u_{i^*}^2}e_{i^*} e_{i^*}^\top$~.

\subsection{Rank 2}
Generally, when $\rank(A)=2$, we cannot construct a $1$-norm minimizing symmetric reflexive generalized inverse based on the symmetric block construction.
For example, with
$$A:=\begin{bmatrix}[r]
5& 4& 2\\
4& 5& -2\\
2& -2& 8
\end{bmatrix},$$
we have a symmetric reflexive generalized inverse
$H:=\frac{1}{81}A\quad (\text{because}~A^2=9A)$,
with $\|H\|_1=\frac{34}{81}$. While the three symmetric reflexive generalized inverses based on the symmetric block construction have $1$-norm equal to $\frac{17}{36},\frac{17}{36},2$, all greater than $\frac{34}{81}$.

Next, we demonstrate that under the natural but restrictive condition that $A$ is non-negative, when $\rank(A) = 2$, construction of a $1$-norm minimizing symmetric reflexive generalized inverse can be based on the symmetric block construction over the $2\times 2$ principal submatrix of $A$.

\begin{theorem}
\label{thmr2}
Let $A$ be an arbitrary rank-$2$ non-negative symmetric matrix. For any $i_1,i_2\in \{1,\ldots,n\}$, with $i_1< i_2$, let $\tilde{A}:=A[\{i_1,i_2\}]$. If $i_1,i_2$ are chosen to minimize the 1-norm of $\tilde{A}^{-1}$ among all nonsingular $2\times 2$ principal submatrices, then the $n\times n$ matrix $H$ constructed by \cref{thm:symconstruction} over $\tilde{A}$, is a symmetric reflexive generalized inverse of $A$ with minimum $1$-norm.
\end{theorem}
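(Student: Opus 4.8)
The plan is to certify optimality through the linear-optimization dual \eqref{eqn:D}, exactly as in the rank-1 case but now with a genuinely two-dimensional dual matrix. The primal object is the block solution $H$ produced by \cref{thm:symconstruction} over $\tilde{A}$, which is supported on the principal block indexed by $\{i_1,i_2\}$ with value $\tilde{A}^{-1}$, so that $\norm{H}_1=\norm{\tilde{A}^{-1}}_1$; writing $a:=a_{i_1i_1}$, $c:=a_{i_2i_2}$, $b:=a_{i_1i_2}$ (all nonnegative, by hypothesis), this equals $(a+c+2b)/\abs{ac-b^2}$. Because \eqref{eqn:P} is a relaxation to \emph{all} generalized inverses (symmetry is not imposed), it suffices to exhibit a single feasible $W$ for \eqref{eqn:D} with $\langle A,W\rangle=\norm{\tilde{A}^{-1}}_1$; weak duality then forces $H$ to be $1$-norm minimizing among all generalized inverses, hence in particular among symmetric ones.

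Let $S:=\mathrm{sign}(\tilde{A}^{-1})$ be the entrywise sign matrix and define $W$ to be zero except on the block indexed by $\{i_1,i_2\}$, where it equals $\tilde{A}^{-1}S\tilde{A}^{-1}$. The objective then matches the primal value: since $\tilde{A}$ is symmetric and $W$ vanishes off the block, $\langle A,W\rangle=\langle \tilde{A},\tilde{A}^{-1}S\tilde{A}^{-1}\rangle=\mathrm{trace}(S\tilde{A}^{-1})=\langle S,\tilde{A}^{-1}\rangle=\norm{\tilde{A}^{-1}}_1$. For feasibility I would first record the rank-$2$ factorization $A=D\tilde{A}D^\top$, where $D:=A[:,\{i_1,i_2\}]\tilde{A}^{-1}$ has its rows indexed by $i_1,i_2$ equal to the standard basis vectors; this holds because $\rank(A)=2$ forces every column of $A$ to lie in the span of columns $i_1,i_2$. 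A direct computation using symmetry then gives $A^\top W A^\top=AWA=DSD^\top$, so the dual constraint $\norm{AWA}_{\max}\le 1$ is exactly the requirement $\abs{d_k^\top S d_l}\le 1$ for all rows $d_k,d_l$ of $D$.

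The remaining, and main, work is this entrywise bound. When $b>0$, $S$ equals $\mathrm{sign}(ac-b^2)$ times the matrix with $+1$ on the diagonal and $-1$ off it, whence $d_k^\top S d_l=\mathrm{sign}(ac-b^2)\,(d_{k,1}-d_{k,2})(d_{l,1}-d_{l,2})$ and the bound collapses to the single-index condition $\abs{d_{k,1}-d_{k,2}}\le 1$ for every $k$; the degenerate case $b=0$ is similar, with $S=I$ and the condition $\norm{d_k}_2\le 1$. In either case the condition holds with equality for $k\in\{i_1,i_2\}$ (as complementary slackness predicts), so everything reduces to bounding $d_k$ for a generic index $k$. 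Here both hypotheses enter: nonnegativity of $A$ together with $\rank(A)=2$ makes the $3\times 3$ principal submatrix $A[\{i_1,i_2,k\}]$ singular, yielding the identity $a_{kk}(ac-b^2)=cp^2-2bpq+aq^2$ with $p:=a_{ki_1},\,q:=a_{ki_2}\ge 0$, while the optimality of the pair $\{i_1,i_2\}$ supplies $\norm{\tilde{A}^{-1}}_1\le\norm{A[\{i_1,k\}]^{-1}}_1$ and $\norm{\tilde{A}^{-1}}_1\le\norm{A[\{i_2,k\}]^{-1}}_1$ whenever those submatrices are nonsingular. I expect the hard part to be combining these relations — across the several sign patterns of $ac-b^2$ and the placements of $k$, and with care for vanishing diagonal entries and singular replacement pairs — to deduce $\abs{d_{k,1}-d_{k,2}}\le 1$. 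This case analysis is the crux and the reason the rank-$2$ argument is substantially longer than the rank-$1$ one.
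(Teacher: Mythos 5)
Your overall route is the paper's: exhibit a dual feasible $W$ for \eqref{eqn:D} supported on the $\{i_1,i_2\}$ block with objective value $\norm{\tilde{A}^{-1}}_1$, factor $A=D\tilde{A}D^\top$ so that dual feasibility becomes $\abs{d_k^\top S\, d_l}\le 1$ for all rows $d_k,d_l$ of $D$, and extract the needed bound on the $d_k$ from the minimality of $\norm{\tilde{A}^{-1}}_1$ over nonsingular principal $2\times2$ submatrices. The flaw is your choice of middle matrix $S:=\mathrm{sign}(\tilde{A}^{-1})$. The paper instead uses $M:=2I-J$ if $\det(\tilde{A})>0$ and $M:=J-2I$ if $\det(\tilde{A})<0$, i.e., the sign pattern of $\tilde{A}^{-1}$ \emph{completed} to a full $\pm1$ matrix. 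The two agree on the support of $\tilde{A}^{-1}$, so both give $\langle A,W\rangle=\norm{\tilde{A}^{-1}}_1$; but they differ exactly when $\tilde{A}^{-1}$ has zero entries, and there your certificate becomes infeasible. Concretely, take
\begin{equation*}
A:=\begin{bmatrix}1&0&1\\0&1&1\\1&1&2\end{bmatrix},
\end{equation*}
which is symmetric, nonnegative, of rank $2$; the minimizing block is $\{1,2\}$ with $\tilde{A}=I$ and $\norm{\tilde{A}^{-1}}_1=2$ (the other two blocks give $5$), so all hypotheses of the theorem hold. Here $S=\mathrm{sign}(I)=I$, your $W$ has block $\tilde{A}^{-1}S\tilde{A}^{-1}=I$, and with $d_3=(1,1)^\top$ you get $(AWA)_{33}=d_3^\top d_3=2>1$: your $W$ violates the dual constraint and certifies nothing. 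Your fallback condition for $b=0$, namely $\norm{d_k}_2\le1$, is simply not implied by the hypotheses. By contrast, $M=2I-J$ is rank one, $M=vv^\top$ with $v=(1,-1)^\top$, so feasibility of \emph{every} entry of $DMD^\top$ collapses to the single-index condition $\abs{d_{k1}-d_{k2}}\le1$, which \emph{is} what the optimality comparisons deliver; in the example, $(AWA)_{33}=(1-1)^2=0$. The same defect undercuts your claim for $b>0$: if a diagonal entry of $\tilde{A}$ vanishes (e.g.\ $a=0$, $b>0$, so $\det(\tilde{A})=-b^2<0$), then $\mathrm{sign}(\tilde{A}^{-1})=\bigl[\begin{smallmatrix}-\mathrm{sign}(c)&1\\1&0\end{smallmatrix}\bigr]$, which is not $\pm(2I-J)$ and does not factor as a rank-one sign matrix.

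Separately, even in the generic case where your $S$ coincides with the paper's $M$, your write-up stops at the crux: you say you ``expect'' to combine the identity $a_{kk}(ac-b^2)=cp^2-2bpq+aq^2$ with the two optimality inequalities to deduce $\abs{d_{k1}-d_{k2}}\le1$, but you do not carry this out, and it is where essentially all of the work in the paper's proof lies. Writing $d_k=(x_1,x_2)^\top$, the paper runs four cases ($x_1=0$; $x_2=0$; $x_1,x_2\ne0$ with $x_2\ge x_1$; $x_1,x_2\ne0$ with $x_2<x_1$) and in each massages the comparison inequality into a product such as $\bigl(a_{11}(x_2+1+x_1)+2a_{12}x_2\bigr)(x_2-1-x_1)\le0$, where nonnegativity of $A$ and nonsingularity of $\tilde{A}$ are what make the first factor strictly positive. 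Nonnegativity is used precisely there (and to pin down signs of the $x_i$ in the boundary cases), not merely to fix the sign pattern of $\tilde{A}^{-1}$ as in your sketch. So the proposal needs both the certificate repaired (use the completed sign matrix $M$, not $\mathrm{sign}(\tilde{A}^{-1})$) and the main case analysis actually supplied.
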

\begin{proof}
Without loss of generality that $\tilde{A}$ is in the north-west corner of $A$. So we take $A$ to have the form $\begin{bmatrix}\tilde{A} & B\\ B^\top & D\end{bmatrix}$. Let $M=2I-J$ if $\det(\tilde{A})>0$ and $M=J-2I$ if $\det(\tilde{A})<0$, 
now we let
$$
W:=\begin{bmatrix}
\tilde{W} & 0\\
0 & 0
\end{bmatrix}
:=\begin{bmatrix}
\tilde{A}^{-\top}M\tilde{A}^{-\top} & 0\\
0 & 0
\end{bmatrix}.
$$
The dual objective value
$$\langle A,W\rangle=\mathrm{trace}(A^\top W)=\mathrm{trace}(\tilde{A}^\top \tilde{W})=\mathrm{trace}(M\tilde{A}^{-\top})=\langle M,\tilde{A}^{-1}\rangle=\norm{\tilde{A}^{-1}}_1,$$
i.e., $\langle A,W\rangle = \norm{H}_1$.
Also,
$$
A^\top W A^\top=\begin{bmatrix}
M & M\tilde{A}^{-\top}B\\
B^\top\tilde{A}^{-\top}M & B^\top\tilde{A}^{-\top}M\tilde{A}^{-\top}B
\end{bmatrix}.
$$
Clearly $\norm{M}_{\max}\le 1$. Next, we consider $\bar{\gamma}:=M\tilde{A}^{-\top}\gamma=M\tilde{A}^{-1}\gamma$, where $\gamma$ is an arbitrary column of $B$. As $\mathrm{rank}(A)=2$ and $\tilde{A}$ is nonsingular, we assume that $\gamma = \tilde{A}\begin{bmatrix}x_1\\x_2\end{bmatrix}$. We may as well assume that $x_1,x_2$ are not both zero, because otherwise $\bar{\gamma}=0$ satisfying $\|\bar{\gamma}\|_{\max}\le 1$. We have
$$
\bar{\gamma} = M\begin{bmatrix}x_1\\x_2\end{bmatrix} \quad\Rightarrow\quad \|\bar{\gamma}\|_{\max} = |x_1-x_2|.
$$
Consider $\tilde{A}$ is chosen to minimize the 1-norm of $\tilde{A}^{-1}$ among all nonsingular $2\times 2$ principal submatrices, we have
\begin{align}
&\|\tilde{A}^{-1}\|_1\le \left\|\begin{bmatrix}a_{11} & a_{11}x_1+a_{12}x_2\\
a_{11}x_1+a_{12}x_2 & a_{11}x_1^2+2a_{12}x_1x_2+a_{22}x_2^2\end{bmatrix}^{-1}\right\| \label{eqn:1}\\
&\|\tilde{A}^{-1}\|_1\le \left\|\begin{bmatrix} a_{22} & a_{12}x_1+a_{22}x_2\\
a_{12}x_1+a_{22}x_2 & a_{11}x_1^2+2a_{12}x_1x_2+a_{22}x_2^2\end{bmatrix}^{-1}\right\| \label{eqn:2}
\end{align}
\begin{enumerate}[label=\textbf{Case \arabic*.}]
    \item{$x_1=0$.} If $a_{12}=0$, then $a_{11},a_{22}>0$ because $\tilde{A}$ is nonsingular. Using \eqref{eqn:1}, we have
    $$
    \frac{a_{11}+2a_{12}+a_{22}}{|\det(\tilde{A})|} \le \frac{a_{11}(1+x_1)^2+2a_{12}x_2(1+x_1)+a_{22}x_2^2}{x_2^2|\det(\tilde{A})|}
    $$
    Simplifying, we obtain
    $
    a_{11}(x_2^2-1)\le 0 ~\Rightarrow~ |x_2|\le 1.
    $
    If $a_{12}>0$, then $x_2\ge 0$ because $a_{11}x_1+a_{12}x_2=a_{12}x_2\ge0$. Because $x_1$,$x_2$ are not both zero, we have $x_2>0$. Still using \eqref{eqn:1}, we have
    $
    (a_{11}x_2+a_{11}+2a_{12}x_2)(x_2-1)\le 0
    $
    which implies $x_2\le 1$.
    \item{$x_2=0$.} Similarly by using \eqref{eqn:2}, we have $|x_1|\le 1$ or $0<x_1\le 1$.
    \item{$x_2\ge x_1$, $x_1,x_2\ne 0$.} Using \eqref{eqn:1}, we have
    $$
    \frac{a_{11}+2a_{12}+a_{22}}{|\det(\tilde{A})|} \le \frac{a_{11}(1+x_1)^2+2a_{12}x_2(1+x_1)+a_{22}x_2^2}{x_2^2|\det(\tilde{A})|}
    $$
    Simplifying, we obtain
    $
    (a_{11}(x_2+1+x_1) + 2a_{12}x_2)(x_2-1-x_1)\le 0
    $
    Because $a_{11}(x_2+1+x_1) + 2a_{12}x_2 = 2(a_{11}x_1+a_{12}x_2) + a_{11}(1+x_2-x_1)>0$, (it is zero only when $a_{11}=0$ and $a_{11}x_1+a_{12}x_2=0$, which implies $a_{11}=a_{12}=0$, a contradiction.) we obtain $0\le x_2-x_1\le 1$.
    \item{$x_2< x_1$, $x_1,x_2\ne 0$.} Using \eqref{eqn:2}, we have
    $$
    \frac{a_{11}+2a_{12}+a_{22}}{|\det(\tilde{A})|} \le \frac{a_{11}x_1^2+2a_{12}x_1(1+x_2)+a_{22}(1+x_2)^2}{x_1^2|\det(\tilde{A})|}
    $$
    Simplifying, we obtain
    $
    (a_{22}(x_1+1+x_2) + 2a_{12}x_1)(x_1-1-x_2)\le 0
    $
    Because $a_{22}(x_1+1+x_2) + 2a_{12}x_1 = 2(a_{12}x_1+a_{22}x_2) + a_{22}(1-x_2+x_1)>0$, (it is zero only when $a_{22}=0$ and $a_{12}x_1+a_{22}x_2=0$, which implies $a_{22}=a_{12}=0$, a contradiction.) we obtain $0< x_1-x_2\le 1$.
\end{enumerate}
From the above, we show that $|x_1-x_2|\le 1$, thus $\|M\tilde{A}^{-\top}B\|_{\max} = \|B^\top \tilde{A}^{-\top}M\|_{\max}\le 1$. Finally,
\begin{align*}
\|B^\top\tilde{A}^{-\top}M\tilde{A}^{-\top}B\|_{\max}&=\frac{1}{2}\|B^\top\tilde{A}^{-\top}(4I-2J)\tilde{A}^{-\top}B\|_{\max}\\
&=\frac{1}{2}\|B^\top\tilde{A}^{-\top}M^2\tilde{A}^{-\top}B\|_{\max}\\
&\le \|B^\top \tilde{A}^{-\top}M\|_{\max}\|M\tilde{A}^{-\top}B\|_{\max}\le 1
\end{align*}
Therefore, $W$ is dual feasible. By the weak duality, we know that $H$ is a symmetric generalized inverse of $A$ with minimum $1$-norm, which is also reflexive.
\qed
\end{proof}


\section{ah-symmetric}\label{sec:ah-sym}

Our starting point is the construction of \emph{column block solutions} via the following result.

\begin{theorem}[\cite{XFLP}]\label{thm:ahconstruction}
For $A\in\mathbb{R}^{m\times n}$, let $r := \rank(A)$. For any $T$, an ordered subset of $r$ elements from $\{1,\dots,n\}$, let $\hat{A}:=A[:,T]$ be the $m \times r$ submatrix of $A$ formed by columns $T$. If $\rank(\hat{A})=r$, let
$
\hat{H} := \hat{A}^+ =(\hat{A}^\top\hat{A})^{-1}\hat{A}^\top.
$
The $n \times m$ matrix $H$ with all rows equal to zero, except rows $T$, which are given by $\hat{H}$, is an ah-symmetric reflexive generalized inverse of $A$.
\end{theorem}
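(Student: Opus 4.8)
The plan is to recast the block construction in terms of a single selection matrix and then reduce every M-P property to two elementary facts about the pseudoinverse of a full-column-rank matrix. Let $E\in\mathbb{R}^{n\times r}$ be the matrix whose $j$-th column is the standard unit vector $e_{T_j}\in\mathbb{R}^n$, so that $E$ selects (on the right) or inserts (on the left) the columns/rows indexed by $T$. With this notation the two key objects of the construction become $\hat{A}=AE$ and $H=E\hat{A}^+$: multiplying $\hat{A}^+$ on the left by $E$ places its $r$ rows into the rows of $H$ indexed by $T$ and leaves all other rows zero, which is exactly the matrix $H$ in the statement.

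Next I would record the two pseudoinverse identities I intend to use. Since $\rank(\hat{A})=r$, the matrix $\hat{A}$ has full column rank, so $\hat{A}^\top\hat{A}$ is invertible and the stated formula $\hat{A}^+=(\hat{A}^\top\hat{A})^{-1}\hat{A}^\top$ gives immediately $\hat{A}^+\hat{A}=I_r$. Moreover $\hat{A}\hat{A}^+=\hat{A}(\hat{A}^\top\hat{A})^{-1}\hat{A}^\top$ is symmetric, being the orthogonal projector onto $\mathrm{col}(\hat{A})$.

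The one genuinely structural step — and the place where the hypothesis $\rank(\hat{A})=r=\rank(A)$ enters — is the identity $\hat{A}\hat{A}^+A=A$. I would argue it as follows: $\mathrm{col}(\hat{A})\subseteq\mathrm{col}(A)$ because the columns of $\hat{A}$ are a subset of the columns of $A$; combined with $\dim\mathrm{col}(\hat{A})=\rank(\hat{A})=r=\rank(A)=\dim\mathrm{col}(A)$, this forces $\mathrm{col}(\hat{A})=\mathrm{col}(A)$. Hence every column of $A$ already lies in $\mathrm{col}(\hat{A})$ and is fixed by the orthogonal projector $\hat{A}\hat{A}^+$, giving $\hat{A}\hat{A}^+A=A$. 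I expect this to be the main obstacle in the sense that it is the only step that is not pure symbol-pushing; the rest is forced once it is in hand.

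With these facts the three M-P properties follow mechanically. Writing $AH=A(E\hat{A}^+)=(AE)\hat{A}^+=\hat{A}\hat{A}^+$, property \ref{property3} is immediate since $\hat{A}\hat{A}^+$ is symmetric. Property \ref{property1} is $AHA=\hat{A}\hat{A}^+A=A$ by the structural identity. Finally, for reflexivity I would compute $HAH=E\hat{A}^+(AH)=E\hat{A}^+\hat{A}\hat{A}^+=E(\hat{A}^+\hat{A})\hat{A}^+=EI_r\hat{A}^+=E\hat{A}^+=H$, using $\hat{A}^+\hat{A}=I_r$; thus \ref{property2} holds. (Alternatively, reflexivity can be read off from the rank characterization quoted earlier, since $\rank(H)=\rank(E\hat{A}^+)=r=\rank(A)$, because $E$ has full column rank and $\hat{A}^+$ has full row rank.) This establishes that $H$ is an ah-symmetric reflexive generalized inverse of $A$.
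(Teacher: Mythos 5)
Your proof is correct. The paper itself states this theorem without proof (it is quoted from \cite{XFLP}), so there is no in-paper argument to compare against; your verification is precisely the standard one: writing $\hat{A}=AE$ and $H=E\hat{A}^+$ with a selection matrix $E$, using $\hat{A}^+\hat{A}=I_r$ and the symmetry of the projector $\hat{A}\hat{A}^+$, and deriving the one substantive identity $\hat{A}\hat{A}^+A=A$ from the column-space equality $\mathrm{col}(\hat{A})=\mathrm{col}(A)$ forced by $\rank(\hat{A})=r=\rank(A)$, after which \ref{property1}, \ref{property2} and \ref{property3} all follow mechanically (and your alternative rank argument for reflexivity, $\rank(H)=r=\rank(A)$ together with the cited fact from \cite{RohdeThesis}, is equally valid).
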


Similarly as before, we note that it is useful to consider   $\min\{\|H\|_1: \ref{property1}+\ref{property3}\}=\min\{\norm{H}_1: AHA=A,~(AH)^\top=AH\}$, which we re-cast as a linear-optimization problem \eqref{eqn:Pah} and its dual \eqref{eqn:Dah}:
\begin{equation*}\label{eqn:Pah}\tag{\text{$P_{ah}$}}
\begin{array}{ll}
\mbox{minimize }& \langle J, H^+\rangle + \langle J, H^-\rangle\\
\mbox{subject to} & A(H^+ - H^-)A=A,\\
& (H^+ - H^-)^\top A^\top = A(H^+ - H^-),\\
& H^+,H^-\ge 0.
\end{array}
\end{equation*}
\begin{equation*}\label{eqn:Dah}\tag{\text{$D_{ah}$}}
\begin{array}{ll}
\mbox{maximize }& \langle A, W\rangle\\
\mbox{subject to} & -J\le A^\top W A^\top + A^\top (V^\top-V)\le J\\
\end{array}
\end{equation*}
We can see \eqref{eqn:Dah}   as: $\max\{\langle A,W\rangle:~\|A^\top WA^\top+A^\top U\|_{\max}\le 1, ~U^\top=-U\}$.

\subsection{Rank 1}
Next, we demonstrate that when $\rank(A) = 1$, construction
of a $1$-norm minimizing ah-symmetric reflexive generalized inverse can be based on the column block construction.
\begin{theorem}
\label{thmr1withP3b}
Let $A$ be an arbitrary $m\times n$, rank-1 matrix.
For any $j\in\{1,\dots,n\}$, let $\hat{a}$ be column $j$ of $A$. If $j$ is chosen to minimize the 1-norm of $\hat{a}^+$ among all columns except the zero columns, then the $n\times m$ matrix $H$ constructed by \cref{thm:ahconstruction} over $\hat{a}$, is an ah-symmetric reflexive generalized inverse of $A$ with minimum 1-norm.
\end{theorem}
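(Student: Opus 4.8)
The plan is to mirror the rank-1 symmetric argument, exhibiting a feasible solution of the dual \eqref{eqn:Dah} whose objective equals $\|H\|_1$, so that weak duality forces optimality. First I would write the rank-1 matrix as $A=uv^\top$ with nonzero $u\in\mathbb{R}^m$, $v\in\mathbb{R}^n$. Column $j$ of $A$ is $\hat{a}=v_j u$, so $\hat{a}^+=u^\top/(v_j\|u\|_2^2)$ and $\|\hat{a}^+\|_1=\|u\|_1/(|v_j|\,\|u\|_2^2)$; minimizing over the nonzero columns thus selects $j^*$ with $|v_{j^*}|=\|v\|_{\max}$, and the resulting $H=e_{j^*}\hat{a}^+$ has $\|H\|_1=\|u\|_1/(\|v\|_{\max}\|u\|_2^2)$. \Cref{thm:ahconstruction} already guarantees that this $H$ is ah-symmetric and reflexive, hence feasible for \eqref{eqn:Pah}, so only the matching dual bound remains to be produced.

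Next I would exploit the rank-1 structure in the dual. Since $A^\top=vu^\top$, for any $W$ we have $A^\top W A^\top=(u^\top W v)\,vu^\top$ and $\langle A,W\rangle=u^\top W v=:\alpha$; writing $w:=U^\top u$ for the antisymmetric multiplier gives $A^\top U=v w^\top$. Hence $A^\top W A^\top+A^\top U=v(\alpha u+w)^\top$ is rank one, with $\|A^\top WA^\top+A^\top U\|_{\max}=\|v\|_{\max}\,\|\alpha u+w\|_{\max}$. Antisymmetry of $U$ forces $u^\top w=0$, and conversely every $w\perp u$ equals $U^\top u$ for some antisymmetric $U$ (e.g.\ $U=(uw^\top-wu^\top)/\|u\|_2^2$). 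Thus the dual reduces to maximizing $\alpha$ subject to $\|v\|_{\max}\,\min_{w\perp u}\|\alpha u+w\|_{\max}\le 1$.

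The key step is to evaluate this inner minimization. Setting $p:=\alpha u+w$, the constraint $w\perp u$ is exactly $u^\top p=\alpha\|u\|_2^2$, and the $\ell_1$--$\ell_\infty$ duality gives $\min\{\|p\|_{\max}:~u^\top p=c\}=|c|/\|u\|_1$, attained at $p=(c/\|u\|_1)\,\mathrm{sign}(u)$. Hence the largest feasible $\alpha$ satisfies $\|v\|_{\max}\,\alpha\|u\|_2^2/\|u\|_1=1$, i.e.\ $\alpha=\|u\|_1/(\|v\|_{\max}\|u\|_2^2)=\|H\|_1$. To make this concrete I would take this $\alpha$, then $\beta:=1/\|v\|_{\max}$, $w:=\beta\,\mathrm{sign}(u)-\alpha u$, $W:=\alpha\,uv^\top/(\|u\|_2^2\|v\|_2^2)$, and $U:=(uw^\top-wu^\top)/\|u\|_2^2$. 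Then $\langle A,W\rangle=\alpha=\|H\|_1$, while $A^\top WA^\top+A^\top U=\beta\,v\,\mathrm{sign}(u)^\top$ has max-norm $\beta\|v\|_{\max}=1$, so the dual point is feasible with objective $\|H\|_1$, and weak duality closes the argument.

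The main obstacle is exactly the construction of the antisymmetric correction $U$: taking $U=0$ would only yield $\alpha\le 1/(\|v\|_{\max}\|u\|_{\max})$, which is strictly weaker than $\|H\|_1$ unless $\|u\|_2^2=\|u\|_1\|u\|_{\max}$. The insight that the antisymmetry freedom can be used to ``flatten'' $\alpha u+w$ into a multiple of $\mathrm{sign}(u)$ --- reducing the dual to a single-constraint $\ell_\infty$-minimization solved by $\ell_1$--$\ell_\infty$ duality --- is what makes the bound tight, and I expect this to be the one genuinely nonroutine idea; the remaining verifications are direct computations with rank-1 matrices.
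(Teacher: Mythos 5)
Your proof is correct, and while it rests on the same foundation as the paper's --- exhibiting a feasible solution of \eqref{eqn:Dah} whose objective matches $\|H\|_1$ and invoking weak duality --- the way you build and validate the certificate is genuinely different. The paper never factors $A$: it works with the chosen column $\hat{a}$ and a nonzero entry $\hat{a}_i$, posits a $W$ supported on the single entry $(i,j)$ and a skew-symmetric $U$ supported on row and column $i$, verifies the key identity $\hat{a}^\top (W A^\top + U) = \mathrm{sign}(\hat{a}^+)$ (its equation \eqref{econd0}) by direct computation, and then --- this is where the hypothesis enters --- checks the dual constraint on every other column $\hat{b}=\alpha\hat{a}$ by deriving $|\alpha|\le 1$ from the minimality assumption $\|\hat{a}^+\|_1\le\|\hat{b}^+\|_1$. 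You instead factor $A=uv^\top$, observe that the dual depends on $W$ only through the scalar $\alpha=u^\top W v$ and on $U$ only through $w=U^\top u$, show that skew-symmetry of $U$ is exactly the constraint $w\perp u$, and solve $\min_{w\perp u}\norm{\alpha u + w}_{\max}$ in closed form by $\ell_1$--$\ell_\infty$ duality; this computes the exact dual optimum $\norm{u}_1/(\norm{v}_{\max}\norm{u}_2^2)$ and identifies the optimal column analytically as the one maximizing $|v_j|$. Amusingly, the two routes produce the same certificate matrix, $A^\top W A^\top + A^\top U = v\,\mathrm{sign}(u)^\top/\norm{v}_{\max}$, just represented by different pairs $(W,U)$. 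What each buys: your scalar reduction is cleaner and more informative for rank $1$ (it yields the optimal value as a formula, not merely optimality of the candidate), but it is special to rank $1$; the paper's entrywise construction and column-by-column feasibility check is the template that generalizes, via \cref{thm:WU}, to the rank-$2$ case treated in \cref{thmr2withP3}.
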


\begin{proof}
We prove a stronger result --- that our constructed $H$ is a 1-norm minimizing ah-symmetric generalized inverse. By our construction, $H$ is reflexive, thus $H$ is an ah-symmetric reflexive generalized inverse with minimum 1-norm. To establish the minimum 1-norm of $H$, we consider the linear-optimization problems \eqref{eqn:Pah} and \eqref{eqn:Dah}. As verified in \cref{thm:ahconstruction}, $H$ is a feasible solution for \eqref{eqn:Pah}, and its objective value is
$
\|H\|_1= \|\hat{a}^+\|_1
$
(it also satisfies the nonlinear equations \eqref{property2}).

The objective function of \eqref{eqn:Dah} only depends on the variable $W$.
Feasibility of a $W$ is equivalent to the existence of
a skew-symmetric matrix $U$ so that
\begin{equation}
\label{dconst0}
\| A^\top W A^\top +  A^\top U\|_{\max} \leq 1~.
\end{equation}
Next, we are going to construct a dual feasible solution $W$ with objective value $\langle A,W\rangle = \|H\|_1$~; then by the weak duality for linear optimization, we establish that $H$ is optimal to \eqref{eqn:Pah}.

Let $z:=\mbox{sign}(\hat{a}^+)$. Suppose that $\hat{a}_i$ is a nonzero element in $\hat{a}$ with index $i$. Let $W$ be an $m\times n$ matrix with all elements equal to zero, except the one in row $i$ and column $j$, which is given by $\hat{w}$. Let $U$ be an $m\times m$ skew-symmetric matrix, with only row $i$ and column $i$ different from zero.

If $\hat{w}$ and $U$ are chosen to be
$$
\hat{w} := \frac{1}{\hat{a}_i} z(\hat{a}^+)^\top,~
u_{ki}=-u_{ik}:= \frac{1}{\hat{a}_i}(
\hat{a}_kz (\hat{a}^+)^\top-z_k),~\forall ~k\ne i,
$$
then they satisfy
\begin{equation}
\label{econd0}
\hat{a}_{i}\hat{w}\hat{a}^\top + \hat{a}^\top U=z~.
\end{equation}
This is because for $k\ne i$, $\hat{a}_i\hat{w}\hat{a}_k+\hat{a}_i u_{ik}=z(\hat{a}^+)^\top\hat{a}_k+z_k-\hat{a}_kz(\hat{a}^+)^\top=z_k$, and
\begin{align*}
 &\hat{a}_i\hat{w}\hat{a}_k+\sum_{k\ne i}\hat{a}_k u_{ki}=z(\hat{a}^+)^\top(\hat{a}_i+\sum_{k\ne i}\frac{\hat{a}_k^2}{\hat{a}_i})-\sum_{k\ne i}\frac{\hat{a}_k}{\hat{a}_i}z_k\\
 &\qquad =\frac{1}{a_i}\Big(z(\hat{a}^+)^\top(\hat{a}^\top\hat{a})-z\hat{a}\Big)+z_i=z_i~,   
\end{align*}
and
$$
\mbox{trace}(A^\top W)=\hat{a}_{i}\hat{w} = z(\hat{a}^+)^\top =\|\hat{a}^+\|_1=\|H\|_1~.
$$

The dual constraint \eqref{dconst0} can be written as
\begin{equation}
\label{dconsta0}
\| \hat{a}^\top W A^\top +  \hat{a}^\top U\|_{\max} \leq 1~,\,
\end{equation}
and
\begin{equation}
\label{dconstb0}
\| \hat{B}^\top W A^\top +  \hat{B}^\top U\|_{\max} \leq 1~.\,
\end{equation}
From \eqref{econd0}, we have that \eqref{dconsta0} is satisfied.
To verify \eqref{dconstb0}, let  $\hat{b}\in\mathbb{R}^m$ be an arbitrary column of $\hat{B}$.
As $A$  has rank 1,
\[
\hat{b} = \alpha \hat{a}~,
\]
and
\[
\hat{b}^\top W A^\top +  \hat{b}^\top U = \alpha \hat{a}^\top(W A^\top + U) ~.
\]
Considering \eqref{econd0}, we have that $\| \hat{a}^\top(W A^\top + U)\|_{\max}  = 1$, and therefore,
\begin{equation*}
\label{rel1}
\|\hat{b}^\top W A^\top +  \hat{b}^\top U\|_{\max} =  |\alpha| ~.
\end{equation*}
We have
\[
\hat{a}^+= (\hat{a}^\top\hat{a})^{-1}\hat{a}^\top=\frac{1}{\hat{a}^\top\hat{a}}\hat{a}^\top~.
\]
We also have
\[
\hat{b}^+= (\hat{b}^\top\hat{b})^{-1}\hat{b}^\top
=\frac{1}{\hat{b}^\top\hat{b}}\hat{b}^\top~
=\frac{1}{\alpha (\hat{a}^\top\hat{a})}\hat{a}^\top~.
\]
From optimality of $H$, we have
$
\|\hat{a}^+\|_1\leq \|\hat{b}^+\|_1~.
$
Therefore
\[
\frac{1}{|\hat{a}^\top\hat{a}|}\|\hat{a}\|_1
\leq \frac{1}{|\alpha|\, |\hat{a}^\top\hat{a}|}\|\hat{a}\|_1~.
\]
So, $ |\alpha| \leq 1 $, which implies that \eqref{dconstb0} is satisfied.
\qed
\end{proof}
\smallskip
Before moving on to the rank-$2$ case, we generalize 
the choice of $\hat{w},U$ satisfying \eqref{econd0} to the general rank-$r$ case.
\begin{theorem}[\cite{XFLP}]\label{thm:WU}
Let $T$ be an ordered subset of $r$ elements from $\{1,\dots,n\}$ and $\hat{A}:=A[:,T]$ be the $m\times r$ submatrix of an $m\times n$ matrix $A$ formed by columns $T$, and $\rank(\hat{A})=r$. There exists an $m\times n$ matrix $W$ and a skew-symmetric $m\times m$ matrix $U$ such that
\begin{equation*}
\hat{A}^\top W A^\top + \hat{A}^\top U=Z,
\end{equation*}
where $Z :=\mbox{sign}(\hat{A}^+)$. Furthermore, $\langle A,W\rangle=\|\hat{A}^+\|_1$.
\end{theorem}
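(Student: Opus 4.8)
The plan is to treat the asserted identity as a linear system in $W$ and the skew-symmetric $U$, and to split the work into two essentially independent pieces: (i) showing that \emph{any} solution of the identity automatically has the advertised objective value $\langle A,W\rangle=\norm{\hat A^+}_1$, and (ii) exhibiting a solution in which $U$ is skew-symmetric. For (i) I would first record two structural facts. Since $\hat A$ collects $r$ linearly independent columns of $A$ and $\rank(A)=r$, the columns of $A$ lie in the column space of $\hat A$, so $\hat A\hat A^+A=A$, where $\hat A\hat A^+=\hat A(\hat A^\top\hat A)^{-1}\hat A^\top$ is the orthogonal projector onto that column space; and $\mathrm{trace}(XY)=0$ whenever $X$ is symmetric and $Y$ is skew-symmetric. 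Transposing $\hat A^\top WA^\top+\hat A^\top U=Z$ gives $Z^\top=AW^\top\hat A+U^\top\hat A$, and I would compute $\langle Z,\hat A^+\rangle=\mathrm{trace}(\hat A^+Z^\top)=\mathrm{trace}(\hat A\hat A^+AW^\top)+\mathrm{trace}(\hat A\hat A^+U^\top)$. The second trace vanishes by the symmetric/skew fact, while the first collapses to $\mathrm{trace}(AW^\top)=\langle A,W\rangle$ via $\hat A\hat A^+A=A$. Since $Z=\mbox{sign}(\hat A^+)$ forces $\langle Z,\hat A^+\rangle=\norm{\hat A^+}_1$, this already yields $\langle A,W\rangle=\norm{\hat A^+}_1$ for every feasible pair, so the objective drops out of the analysis.

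It remains to produce $W$ and a skew $U$ solving the identity. I would multiply it on the right by the complementary projector $Q:=I-\hat A\hat A^+$. Transposing $\hat A\hat A^+A=A$ gives $A^\top Q=0$, so the $W$-term is annihilated and the identity forces the necessary condition $\hat A^\top UQ=ZQ$. Conversely, once a skew $U$ with $\hat A^\top UQ=ZQ$ is in hand, the matrix $Z-\hat A^\top U$ has all of its rows in $\mathrm{col}(\hat A)$ (because $(Z-\hat A^\top U)Q=0$), and I would check that the residual equation $\hat A^\top WA^\top=Z-\hat A^\top U$ is then solvable for $W$: writing $A=\hat A C$ with $C$ of full row rank $r$, the map $W\mapsto\hat A^\top WA^\top=(\hat A^\top WC^\top)\hat A^\top$ ranges over exactly those $r\times m$ matrices whose rows lie in $\mathrm{col}(\hat A)$. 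Thus the entire problem reduces to finding a skew-symmetric $U$ with $\hat A^\top UQ=ZQ$.

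This last reduction is where I expect the main obstacle to sit, namely confirming that skew-symmetry does not conflict with the equation. I would pass to an orthonormal basis adapted to $\mathbb{R}^m=\mathrm{col}(\hat A)\oplus\mathrm{col}(\hat A)^\perp$: take a (thin) QR factorization $\hat A=Q_1R_1$ with $Q_1\in\mathbb{R}^{m\times r}$ an orthonormal basis of $\mathrm{col}(\hat A)$ and $R_1$ invertible, and let $Q_2\in\mathbb{R}^{m\times(m-r)}$ be an orthonormal basis of the complement, so that $Q=Q_2Q_2^\top$ and $\hat A^\top=R_1^\top Q_1^\top$. Cancelling the right-invertible factor $Q_2^\top$, the equation becomes $Q_1^\top UQ_2=R_1^{-\top}ZQ_2=:M$. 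The crucial observation is that, in the basis $[Q_1,Q_2]$, orthogonal conjugation preserves skew-symmetry, and the off-diagonal block $Q_1^\top UQ_2$ of a skew-symmetric matrix is completely free; hence setting $U:=Q_1MQ_2^\top-Q_2M^\top Q_1^\top$ gives a skew $U$ with exactly the required block. Recovering $W$ as in the previous paragraph then finishes the argument. The work is therefore conceptual rather than computational: the only genuine constraint lives in the block of $U$ that skew-symmetry leaves unconstrained, which is what makes this coordinate-free construction go through in place of the explicit rank-$1$ formulas for $\hat w$ and $U$.
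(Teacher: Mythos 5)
Your proof is correct, but it takes a genuinely different route from the source. The paper itself does not reprove this theorem (it is imported from \cite{XFLP}); the proof there is an explicit construction, whose rank-one instance appears in the proof of \cref{thmr1withP3b}: there $W$ has a single nonzero entry $\hat{w}=\frac{1}{\hat{a}_i}z(\hat{a}^+)^\top$, $U$ is skew-symmetric and supported on row/column $i$, and both the identity \eqref{econd0} and the value $\langle A,W\rangle=\norm{\hat{a}^+}_1$ are verified by direct entrywise computation; the general-rank argument extends this explicit pattern, confining the support of $W$ and $U$ to a nonsingular $r\times r$ row block of $\hat{A}$. You instead argue structurally, in three steps, all of which check out: (a) the value $\langle A,W\rangle=\norm{\hat{A}^+}_1$ is \emph{automatic} for every feasible pair, because $\hat{A}\hat{A}^+A=A$ collapses the $W$-term of $\langle Z,\hat{A}^+\rangle$ to $\langle A,W\rangle$ while the $U$-term dies since the trace of a symmetric times a skew-symmetric matrix is zero; (b) existence reduces, via the complementary projector $Q=I-\hat{A}\hat{A}^+$ and solvability of the residual equation $\hat{A}^\top WA^\top=Z-\hat{A}^\top U$ (which rests on the map $W\mapsto\hat{A}^\top WC^\top$ being onto $\mathbb{R}^{r\times r}$, valid because $\hat{A}$ has full column rank and $C=\hat{A}^+A$ has full row rank), to finding a skew $U$ with $\hat{A}^\top UQ=ZQ$; (c) that equation is always solvable because, in an orthonormal basis adapted to $\mathrm{col}(\hat{A})\oplus\mathrm{col}(\hat{A})^\perp$, the off-diagonal block $Q_1^\top UQ_2$ of a skew-symmetric matrix is completely unconstrained, so $U:=Q_1MQ_2^\top-Q_2M^\top Q_1^\top$ works. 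The trade-off: the paper's route yields sparse, explicitly computable dual certificates using only elementary algebra, whereas yours is coordinate-free, explains why the dual objective value needs no separate verification, and isolates exactly where skew-symmetry enters. One caveat worth recording: your step (a) genuinely requires $\rank(A)=r$ (equivalently $\mathrm{col}(A)=\mathrm{col}(\hat{A})$), a hypothesis the statement leaves implicit; since the paper always takes $r:=\rank(A)$, this is the intended reading and your use of it is legitimate, but without it the ``automatic'' claim (and the theorem's role as a dual certificate) would fail.
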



\subsection{Rank 2}
Generally, when $\rank(A)=2$, we cannot construct a $1$-norm minimizing ah-symmetric reflexive generalized inverse based on the column block construction. Even under the condition that $A$ is non-negative, we have the following example:
$$
A =\begin{bmatrix}1&3&8\\ 2&2&8\\ 3&1&8\end{bmatrix}.
$$
Note that $\mathrm{rank}(A)=2$ because $a_3=2a_1+2a_2$. We have an ah-symmetric reflexive generalize inverse with $1$-norm $\frac98$,
$$
H :=\begin{bmatrix}[r]
-\frac14 & 0 &\frac14 \\
\frac14 & 0 & -\frac14\\
\frac{1}{24} &\frac{1}{24} & \frac{1}{24}
\end{bmatrix}.
$$
However, the three ah-symmetric reflexive generalized inverses based on our column block construction have $1$-norm
$\frac{31}{24}, \frac{31}{24}, \frac{7}{6}$, respectively.

Next, we demonstrate that under an efficiently-checkable technical condition, when $\rank(A) = 2$, construction of a $1$-norm minimizing ah-symmetric reflexive generalized inverse can be based on the column block construction.

\begin{theorem}
\label{thmr2withP3}
Let $A$ be an arbitrary $m\times n$, rank-$2$ matrix.
For any $j_1,j_2\in \{1,\ldots,n\}$, with $j_1< j_2$, let $\hat{A}:=[\hat{a}_{j_1},\hat{a}_{j_2}]$ be the $m\times 2$ submatrix of $A$ formed by columns  $j_1$ and $j_2$.
Suppose that  $j_1,j_2$ are chosen to minimize the 1-norm of $\hat{H}:=\hat{A}^+$ among all $m\times 2$ rank-$2$ submatrices of $A$. Every column $\hat{b}$ of $A$, can be uniquely written in the basis $\hat{a}_{j_1},\hat{a}_{j_2}$, say
$\hat{b}=\alpha \hat{a}_{j_1} +\beta\hat{a}_{j_2}$.
Suppose that for each such column $\hat{b}$ of $A$,
one of the following conditions holds on the associated $\alpha$, $\beta$:
\vskip5pt
\begin{enumerate}[label=(\roman*),leftmargin=2\parindent,itemsep=1ex]
\item \label{p1} $|\alpha|+|\beta|\le 1;$
\item \label{p2} $\hat{H}_{1j}\hat{H}_{2j}\le 0$ for $j=1,\dots,m$, and $\alpha\beta\ge 0$;
\item \label{p3} $\hat{H}_{1j}\hat{H}_{2j}\ge 0$ for $j=1,\dots,m$, and $\alpha\beta\le 0$.
\end{enumerate}
Then the $n\times m$ matrix $H$ constructed by \cref{thm:ahconstruction} based on $\hat{A}$, is an ah-symmetric reflexive generalized inverse of $A$ with minimum 1-norm.
\end{theorem}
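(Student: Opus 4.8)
The plan is to prove the stronger statement that $H$ is $1$-norm minimizing among \emph{all} ah-symmetric generalized inverses, exactly as in \cref{thmr1withP3b}, via linear-optimization duality between \eqref{eqn:Pah} and \eqref{eqn:Dah}. By \cref{thm:ahconstruction}, the constructed $H$ is feasible for \eqref{eqn:Pah} with $\norm{H}_1=\norm{\hat{A}^+}_1$. For the dual I would invoke \cref{thm:WU} to obtain a matrix $W$ and a skew-symmetric $U$ with $\hat{A}^\top W A^\top + \hat{A}^\top U = Z:=\mbox{sign}(\hat{A}^+)$ and $\langle A,W\rangle=\norm{\hat{A}^+}_1=\norm{H}_1$. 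Since this candidate already matches the primal objective, weak duality reduces everything to verifying that $W$ is dual feasible, i.e.\ that $\norm{A^\top W A^\top + A^\top U}_{\max}\le 1$.

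Next I would reduce dual feasibility to a family of scalar sign inequalities. Writing $z_1,z_2\in\mathbb{R}^m$ for the two rows of $Z$, the rows of $A^\top W A^\top + A^\top U$ indexed by the basis columns reproduce $Z$, whose $\max$-norm is at most $1$ since $Z$ is a sign matrix; and for a general column $\hat{b}=\alpha\hat{a}_{j_1}+\beta\hat{a}_{j_2}$ of $A$, the defining relation for $W,U$ forces its row to equal $\alpha z_1+\beta z_2$. Hence dual feasibility is equivalent to
\[
\abs{\alpha\,\mbox{sign}(\hat{H}_{1k})+\beta\,\mbox{sign}(\hat{H}_{2k})}\le 1\quad\text{for every column }\hat{b}\text{ of }A\text{ and every }k\in\{1,\dots,m\}.
\]
Under condition \ref{p1} this is immediate, since the left-hand side is at most $\abs{\alpha}+\abs{\beta}\le 1$.

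For conditions \ref{p2} and \ref{p3} I would extract the needed coefficient bound from the minimality of $\norm{\hat{A}^+}_1$ using column swaps. Writing $\hat{A}'=[\hat{b},\hat{a}_{j_2}]=\hat{A}R$ with the invertible $R=\left(\begin{smallmatrix}\alpha&0\\\beta&1\end{smallmatrix}\right)$ (valid when $\alpha\ne0$), full column rank gives $(\hat{A}')^+=R^{-1}\hat{A}^+$, so the two rows of $(\hat{A}')^+$ are $\tfrac{1}{\alpha}\hat{H}_{1,:}$ and $\hat{H}_{2,:}-\tfrac{\beta}{\alpha}\hat{H}_{1,:}$. Under the sign pattern imposed by \ref{p2} (resp.\ \ref{p3}) together with $\alpha\beta\ge 0$ (resp.\ $\alpha\beta\le 0$) there is no cancellation in the second row, so $\norm{(\hat{A}')^+}_1=\tfrac{1}{\abs{\alpha}}\norm{\hat{H}_{1,:}}_1+\norm{\hat{H}_{2,:}}_1+\tfrac{\abs{\beta}}{\abs{\alpha}}\norm{\hat{H}_{1,:}}_1$; minimality then yields $\abs{\alpha}\le 1+\abs{\beta}$, and the symmetric swap $[\hat{a}_{j_1},\hat{b}]$ yields $\abs{\beta}\le 1+\abs{\alpha}$, hence $\abs{\,\abs{\alpha}-\abs{\beta}\,}\le 1$. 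Combined with the sign conditions this bounds exactly those entries $k$ for which both $\hat{H}_{1k}$ and $\hat{H}_{2k}$ are nonzero, where the two terms have opposite signs and the expression equals $\abs{\,\abs{\alpha}-\abs{\beta}\,}$.

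The main obstacle is the remaining entries, namely those $k$ for which exactly one of $\hat{H}_{1k},\hat{H}_{2k}$ vanishes: there the target inequality collapses to $\abs{\alpha}\le 1$ or $\abs{\beta}\le 1$, which the simple swaps above do \emph{not} deliver, as they only control the difference $\abs{\,\abs{\alpha}-\abs{\beta}\,}$. I expect that excluding a violating configuration requires a finer analysis of the support and sign structure of the minimizer $\hat{A}^+$ — relating the vanishing of an entry of $\hat{H}$ to the geometry of the rows of $\hat{A}$ in the two-dimensional column space — carried out through a case distinction in the spirit of the proof of \cref{thmr2}. This is precisely where the technical hypotheses are used, and it is the step whose essential necessity is confirmed separately in this section; I would therefore budget most of the effort here.
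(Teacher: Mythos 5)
Your outline follows the paper's proof essentially step for step: the same primal--dual setup \eqref{eqn:Pah}/\eqref{eqn:Dah}, the same certificate from \cref{thm:WU}, the same reduction of dual feasibility to bounding $\abs{\alpha Z_{1k}+\beta Z_{2k}}$ for each non-basic column, and the same column-swap argument extracting $\bigl|\abs{\alpha}-\abs{\beta}\bigr|\le 1$ from minimality of $\norm{\hat{A}^+}_1$. Your execution of the swap step is in fact cleaner than the paper's: you read off the rows $\tfrac{1}{\alpha}\hat{H}_{1,:}$ and $\hat{H}_{2,:}-\tfrac{\beta}{\alpha}\hat{H}_{1,:}$ from $(\hat{A}R)^+=R^{-1}\hat{A}^+$, where the paper recomputes $\hat{A}_{\hat{b}/1}^+$ from the normal equations; also your no-cancellation observation is unnecessary there, since the triangle inequality alone already yields $\abs{\alpha}\le 1+\abs{\beta}$, which is all the paper uses.

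The step you leave open is a genuine gap in your proposal: at entries $k$ where exactly one of $\hat{H}_{1k},\hat{H}_{2k}$ vanishes, feasibility of the \cref{thm:WU} certificate needs $\abs{\alpha}\le 1$ (or $\abs{\beta}\le 1$), which does not follow from $\bigl|\abs{\alpha}-\abs{\beta}\bigr|\le 1$, and you supply no argument for it. But you should know that the paper does not fill this gap either: its case-(ii) argument asserts $\hat{b}^\top WA^\top+\hat{b}^\top U=(\alpha-\beta)\,\mathrm{sign}(\hat{H}_{1\cdot})$, an identity that presumes $Z_{2k}=-Z_{1k}$ for \emph{every} $k$ and is false precisely at the entries you flag. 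The failure is not cosmetic: taking $\hat{H}=\bigl[\begin{smallmatrix}1&-1&1\\-1&1&0\end{smallmatrix}\bigr]$, $\hat{A}=\hat{H}^+$, and $A:=[\hat{A}\;\;\hat{b}]$ with $\hat{b}=2\hat{a}_{j_1}+2\hat{a}_{j_2}$ gives an instance satisfying hypothesis (ii) and the minimality hypothesis, in which $\norm{[\alpha~\beta]\,\mathrm{sign}(\hat{H})}_{\max}=2$, so the certificate $Z=\mathrm{sign}(\hat{H})$ is genuinely dual-infeasible --- even though the theorem's conclusion can be checked to hold there.

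The missing idea is not a finer analysis of the support of $\hat{H}$ but the freedom of the dual certificate on the zero entries of $\hat{H}$: complementary slackness pins down $A^\top WA^\top+A^\top U$ only where $H\ne 0$, exactly as in the paper's proof of \cref{thmr2withP3necessity}. So in case (ii) replace $Z=\mathrm{sign}(\hat{H})$ by $Z'$ agreeing with it on the support of $\hat{H}$ and satisfying $Z'_{2\cdot}=-Z'_{1\cdot}$ identically (set $Z'_{2k}:=-Z'_{1k}$ when $\hat{H}_{2k}=0$ and $Z'_{1k}:=-Z'_{2k}$ when $\hat{H}_{1k}=0$); in case (iii) make the two rows equal instead. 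The construction behind \cref{thm:WU} (see the explicit rank-1 version in the proof of \cref{thmr1withP3b}) works verbatim for an arbitrary right-hand side $Z'$ and returns $\langle A,W\rangle=\langle Z',\hat{A}^+\rangle$, which still equals $\norm{\hat{A}^+}_1$ because $Z'$ matches $\mathrm{sign}(\hat{A}^+)$ wherever $\hat{A}^+$ is nonzero. With this modified certificate the identity $[\alpha~\beta]Z'=(\alpha-\beta)Z'_{1\cdot}$ (resp.\ $(\alpha+\beta)Z'_{1\cdot}$) holds exactly, and your bound $\bigl|\abs{\alpha}-\abs{\beta}\bigr|\le 1$ finishes dual feasibility; with that repair your outline becomes a complete proof, indeed a corrected version of the paper's.
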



\begin{proof}


We prove a stronger result that our constructed $H$ is a 1-norm minimizing ah-symmetric generalized inverse. By our construction, $H$ is reflexive, thus $H$ is an ah-symmetric reflexive generalized inverse with minimum 1-norm. To establish the minimum 1-norm of our constructed $H$, we consider the dual pair of  linear-optimization problems \eqref{eqn:Pah} and \eqref{eqn:Dah}. As verified in \cref{thm:ahconstruction}, $H$ is a feasible solution for \eqref{eqn:Pah}, and its objective value is
$
\|H\|_1= \|\hat{A}^+\|_1
$
(it also satisfies the nonlinear equations \eqref{property2}).

The objective-function of \eqref{eqn:Dah} only depends on the variable $W$.
Feasibility of $W$ is equivalent to the existence of
a skew-symmetric matrix $U$ satisfying
\begin{equation}
\label{dconst}
\| A^\top W A^\top +  A^\top U\|_{\max} \leq 1~.
\end{equation}
Next, we are going to construct a dual feasible solution $W$ with objective value $\langle A,W\rangle = \|H\|_1$, then by the weak duality for linear optimization, we prove that $H$ is optimal to \eqref{eqn:Pah}.

By \cref{thm:WU}, we can choose $W$ and a skew-symmetric matrix $U$ such that
\begin{equation}
\label{econd}
\hat{A}^\top W A^\top + \hat{A}^\top U=Z~,
\end{equation}
and then
$$
\langle A,W\rangle = \|\hat{A}^+\|_1=\|H\|_1~.
$$

The dual constraint \eqref{dconst} can be written as
\begin{equation}
\label{dconsta}
\| \hat{A}^\top W A^\top +  \hat{A}^\top U\|_{\max} \leq 1~,\,
\end{equation}
and
\begin{equation}
\label{dconstb}
\| \hat{B}^\top W A^\top +  \hat{B}^\top U\|_{\max} \leq 1~.\,
\end{equation}
From \eqref{econd}, we have that \eqref{dconsta} is satisfied.
To verify \eqref{dconstb}, without loss of generality, let $(j_1,j_2)=(1,2)$, and let $\hat{b}\in\mathbb{R}^m$ be an
 arbitrary column of $\hat{B}$ with $\alpha$ and $\beta$ such that $\hat{b}=\alpha\hat{a}_1+\beta\hat{a}_2$~; thus
\[
\hat{b}^\top W A^\top +  \hat{b}^\top U = [\alpha ~\beta](\hat{A}^\top W A^\top +\hat{A}^\top U) = [\alpha ~\beta]Z=[\alpha ~\beta]\mbox{sign}(\hat{H}).
\]
\begin{itemize}[leftmargin=1\parindent]
\item For case \ref{p1}, we have $\|\hat{b}^\top W A^\top +  \hat{b}^\top T\|_{\max}\le (|\alpha| +|\beta|)\|Z\|_{\max}\le 1$.
\item For case \ref{p2}, because $\hat{H}_{1j}\hat{H}_{2j}\le 0$ for $j=1,\dots,m$, we have $\hat{b}^\top W A^\top +  \hat{b}^\top U = (\alpha-\beta)\mbox{sign}(\hat{H}_{1\cdot})$, and thus
$
\|\hat{b}^\top W A^\top +  \hat{b}^\top U\|_{\max}=|\alpha-\beta|.
$
Also we have $\alpha\beta\ge 0$, so
$
\|\hat{b}^\top W A^\top +  \hat{b}^\top U\|_{\max}=\big||\alpha|-|\beta|\big|.
$
\item For case \ref{p3}, because $\hat{H}_{1j}\hat{H}_{2j}\ge 0$ for $j=1,\dots,m$,, we have $\hat{b}^\top W A^\top +  \hat{b}^\top U = (\alpha+\beta)\mbox{sign}(\hat{H}_{1\cdot})$, and thus
$
\|\hat{b}^\top W A^\top +  \hat{b}^\top U\|_{\max}=|\alpha+\beta|.
$
Also we have $\alpha\beta\le 0$~; so
$
\|\hat{b}^\top W A^\top +  \hat{b}^\top U\|_{\max}=\big||\alpha|-|\beta|\big|.
$
\end{itemize}

So to prove the dual feasibility, we only need to show that $||\alpha| - |\beta||\leq1$ .

Let $\hat{A}_{\hat{b}/1}:=[\hat{b}\;\;\hat{a}_2]$ and $\delta_{ij}:=\hat{a}^\top_i\hat{a}_j$, for $i,j=1,2$. We have
\begin{align*}
\hat{A}^+ &= (\hat{A}^\top\hat{A})^{-1}\hat{A}^\top=
 \left(
\left[\begin{array}{c}\hat{a}^\top_1\\ \hat{a}^\top_2\end{array}\right]
[\hat{a}_1\;\;\hat{a}_2]
\right)^{-1}
\left[\begin{array}{c}\hat{a}^\top_1\\ \hat{a}^\top_2\end{array}\right]\\
&=\frac{1}{\theta} \left[\begin{array}{rr}\delta_{22}&-\delta_{12}\\ -\delta_{12}&\delta_{11}\end{array}\right]\left[\begin{array}{c}\hat{a}^\top_1\\ \hat{a}^\top_2\end{array}\right]\\
&=\frac{1}{\theta} \left[\begin{array}{r}\delta_{22}\hat{a}^\top_1-\delta_{12}\hat{a}^\top_2\\ -\delta_{12}\hat{a}^\top_1+\delta_{11}\hat{a}^\top_2\end{array}\right]~,
\end{align*}
where $ \theta = \delta_{11}\delta_{22} - \delta_{12}^2$. We also have
\begin{align*}
\hat{A}_{\hat{b}/1}^+&= (\hat{A}_{\hat{b}/1}^\top\hat{A}_{\hat{b}/1})^{-1}\hat{A}_{\hat{b}/1}^\top\\&=
 \left(
\left[\begin{array}{c}\alpha \hat{a}^\top_{1} + \beta \hat{a}^\top_{2}\\ \hat{a}^\top_2\end{array}\right]
[\alpha \hat{a}_{1} + \beta \hat{a}_{2}\;\;\hat{a}_2]
\right)^{-1}
\left[\begin{array}{c}\alpha \hat{a}^\top_{1} + \beta \hat{a}^\top_{2}\\ \hat{a}^\top_2\end{array}\right]\\
&=\frac{1}{\tilde{\theta}} \left[\begin{array}{cc}\delta_{22}&-\alpha \delta_{12} - \beta \delta_{22}\\ -\alpha \delta_{12} - \beta \delta_{22}&
\alpha^2 \delta_{11} + 2\alpha \beta \delta_{12} + \beta^2 \delta_{22}
\end{array}\right]\left[\begin{array}{c}\alpha\hat{a}^\top_1 + \beta \hat{a}^\top_2\\ \hat{a}^\top_2\end{array}\right]\\
&= \frac{1}{\tilde{\theta}} \left[\begin{array}{cc}\alpha \delta_{22}\hat{a}^\top_1 + \beta \delta_{22}\hat{a}^\top_2-\alpha \delta_{12}\hat{a}^\top_2-\beta \delta_{22}\hat{a}^\top_2\\
-(\alpha \delta_{12} +  \beta
\delta_{22})(\alpha\hat{a}^\top_1 + \beta\hat{a}^\top_2)
+(\alpha^2 \delta_{11}  + 2\alpha \beta \delta_{12} + \beta^2 \delta_{22})\hat{a}^\top_2
\end{array}\right]\\
&= \frac{1}{\tilde{\theta}} \left[\begin{array}{cc}\alpha \delta_{22}\hat{a}^\top_1 -\alpha \delta_{12}\hat{a}^\top_2\\
-\alpha^2 \delta_{12}\hat{a}^\top_1- \alpha \beta
\delta_{22}\hat{a}^\top_1
+\alpha^2 \delta_{11} \hat{a}^\top_2 + \alpha \beta \delta_{12}\hat{a}^\top_2
\end{array}\right]\\
&= \frac{\alpha}{\tilde{\theta}} \left[\begin{array}{cc}\delta_{22}\hat{a}^\top_1 - \delta_{12}\hat{a}^\top_2\\
-\alpha \delta_{12}\hat{a}^\top_1-  \beta
\delta_{22}\hat{a}^\top_1
+\alpha \delta_{11} \hat{a}^\top_2 +  \beta \delta_{12}\hat{a}^\top_2
\end{array}\right]\\
&= \frac{\alpha}{\tilde{\theta}} \left[\begin{array}{cc}\delta_{22}\hat{a}^\top_1 - \delta_{12}\hat{a}^\top_2\\
-\alpha ( \delta_{12}\hat{a}^\top_1- \delta_{11} \hat{a}^\top_2) -  \beta (
\delta_{22}\hat{a}^\top_1 - \delta_{12}\hat{a}^\top_2)
\end{array}\right]~,\\
\end{align*}
where
\begin{align*}
\tilde{\theta}&=  \delta_{22}(\alpha^2 \delta_{11} + 2\alpha \beta \delta_{12} + \beta^2 \delta_{22})  - (\alpha \delta_{12} + \beta \delta_{22})^2\\
&=  \alpha^2 \delta_{11}\delta_{22} + 2\alpha \beta \delta_{12}\delta_{22} + \beta^2 \delta_{22}^2  - (\alpha^2 \delta_{12}^2 + 2\alpha  \beta \delta_{12} \delta_{22}+  \beta^2 \delta_{22}^2)\\
&=  \alpha^2 (\delta_{11}\delta_{22}  -  \delta_{12}^2 )
 =\alpha^2 \theta~.
\end{align*}
From optimality of $H$, we have
$
\|\hat{A}^+\|_1\leq \|\hat{A}_{\hat{b}/1}^+\|_1~.
$
Therefore
\begin{align*}
&\frac{1}{|\theta|} \left(\|\delta_{22}\hat{a}^\top_1-\delta_{12}\hat{a}^\top_2\|_1 + \| -\delta_{12}\hat{a}^\top_1+\delta_{11}\hat{a}^\top_2\|_1\right)  \\
\leq & \frac{|\alpha|}{|\tilde{\theta}|} \left(\|\delta_{22}\hat{a}^\top_1 - \delta_{12}\hat{a}^\top_2\|_1 + \|-\alpha ( \delta_{12}\hat{a}^\top_1- \delta_{11} \hat{a}^\top_2) -  \beta (
\delta_{22}\hat{a}^\top_1 - \delta_{12}\hat{a}^\top_2)\|_1\right)\\
 = & \frac{1}{|\alpha\theta|} \left(\|\delta_{22}\hat{a}^\top_1 - \delta_{12}\hat{a}^\top_2\|_1 + \| \alpha( -\delta_{12}\hat{a}^\top_1+ \delta_{11} \hat{a}^\top_2) +  \beta (
-\delta_{22}\hat{a}^\top_1 + \delta_{12}\hat{a}^\top_2)\|_1\right)\\
\leq &  \frac{1}{|\alpha\theta|} \left(\|\delta_{22}\hat{a}^\top_1 - \delta_{12}\hat{a}^\top_2\|_1 +  \| \alpha( -\delta_{12}\hat{a}^\top_1+ \delta_{11} \hat{a}^\top_2)\|_1 + \|\beta( -
\delta_{22}\hat{a}^\top_1 + \delta_{12}\hat{a}^\top_2)\|_1\right)~\\
 = & \frac{1}{|\alpha\theta|} \left(\|\delta_{22}\hat{a}^\top_1 - \delta_{12}\hat{a}^\top_2\|_1 +  |\alpha| \, \|  -\delta_{12}\hat{a}^\top_1+ \delta_{11} \hat{a}^\top_2\|_1 + |\beta| \, \| -
\delta_{22}\hat{a}^\top_1 + \delta_{12}\hat{a}^\top_2\|_1\right).
\end{align*}
So,
\begin{align*}
& |\alpha| \left(\|\delta_{22}\hat{a}^\top_1-\delta_{12}\hat{a}^\top_2\|_1 + \| -\delta_{12}\hat{a}^\top_1+\delta_{11}\hat{a}^\top_2\|_1\right)  \\
\leq & \|\delta_{22}\hat{a}^\top_1 - \delta_{12}\hat{a}^\top_2\|_1 + |\alpha| \, \|  -\delta_{12}\hat{a}^\top_1+ \delta_{11} \hat{a}^\top_2\|_1 + |\beta| \, \| -
\delta_{22}\hat{a}^\top_1 + \delta_{12}\hat{a}^\top_2\|_1, 
\end{align*}
and
\begin{equation}
\label{rell1}
 |\alpha| -  |\beta| \leq 1.
\end{equation}

Now, considering that
$
\|\hat{A}^+\|_1\leq \|\hat{A}_{\hat{b}/2}^+\|_1~,
$
where $\hat{A}_{\hat{b}/2}:=[\hat{a}_1 \;\; \hat{b}]$, we  obtain
\begin{align*}
&\frac{1}{|\theta|} \left(\|\delta_{22}\hat{a}^\top_1-\delta_{12}\hat{a}^\top_2\|_1 + \| -\delta_{12}\hat{a}^\top_1+\delta_{11}\hat{a}^\top_2\|_1\right)  \\
&  \leq  \frac{|\beta|}{|\tilde{\theta}|} \left(\|-\delta_{12}\hat{a}^\top_1 + \delta_{11}\hat{a}^\top_2\|_1 + \|\alpha ( \delta_{12}\hat{a}^\top_1- \delta_{11} \hat{a}^\top_2) +  \beta (
\delta_{22}\hat{a}^\top_1 - \delta_{12}\hat{a}^\top_2)\|_1\right)\\
&  \leq  \frac{1}{|\beta\tilde{\theta}|} \left(\|-\delta_{12}\hat{a}^\top_1 + \delta_{11}\hat{a}^\top_2\|_1 + |\alpha| \, \|  \delta_{12}\hat{a}^\top_1- \delta_{11} \hat{a}^\top_2\|_1 +  |\beta| \, \|   \delta_{22}\hat{a}^\top_1 - \delta_{12}\hat{a}^\top_2\|_1\right)~.
\end{align*}
So,
\begin{align*}
& |\beta| \left(\|\delta_{22}\hat{a}^\top_1-\delta_{12}\hat{a}^\top_2\|_1 + \| -\delta_{12}\hat{a}^\top_1+\delta_{11}\hat{a}^\top_2\|_1\right)  \\
& \leq  \|-\delta_{12}\hat{a}^\top_1 + \delta_{11}\hat{a}^\top_2\|_1 + |\alpha| \, \|  \delta_{12}\hat{a}^\top_1- \delta_{11} \hat{a}^\top_2\|_1 +  |\beta| \, \|   \delta_{22}\hat{a}^\top_1 - \delta_{12}\hat{a}^\top_2\|_1~, 
\end{align*}
and
\begin{equation}
\label{rell2}
 |\beta| -  |\alpha|
 \leq  1  \\~.
\end{equation}

From \eqref{rell1} and \eqref{rell2}, we have
$
 \big||\alpha| -  |\beta|\big|
 \leq  1~.
$
\qed
\end{proof}
\smallskip
\begin{remark}
The following example shows that we can allow different cases for each column $\hat{b}$. Let
\[
A := \begin{bmatrix}
2  &   3  &   1  &   5\\
2  &   3  &   1  &   5\\
2  &   5  &   2  &   7
\end{bmatrix}.
\]
Then $\{i_1,i_2\}=\{1,2\}$ minimizes the $1$-norm of $\hat{H}$ with $\|\hat{H}\|=3$. We have
\[
\hat{H}=
\begin{bmatrix}[r]
\frac{5}{8}  &   \frac58  &   -\frac34\\[3pt]
-\frac14  &   -\frac14  &   \frac12
\end{bmatrix}
\]
satisfying $\hat{H}_{1j}\hat{H}_{2j}\le 0$ for $j=1,2,3$, and
$\hat{b}_1= [1,1,2]^\top = -\frac14 \hat{a}_1 +\frac12\hat{a}_2$ satisfies only case \ref{p1}, and $\hat{b}_2= [5,5,7]^\top = \hat{a}_1 +\hat{a}_2$ satisfies only case \ref{p2}.
\end{remark}

\smallskip
The technical sufficient condition in \cref{thmr2withP3}, while efficiently checkable, may
seem rather complicated.
But perhaps surprisingly, if $H$ having minimum 1-norm of $\hat{H}$ is an optimal solution to \eqref{eqn:Pah} (i.e., a 1-norm minimizing ah-symmetric generalized inverse of $A$, \emph{following our column block construction}), then the condition is also necessary. So, for rank-$2$, there
is no possibility of further generalizing the condition, in the context of
proving the optimality of our chosen column block construction.

\begin{theorem}\label{thmr2withP3necessity}
Let $A$ be an arbitrary $m\times n$, rank-$2$ matrix.
For any $j_1,j_2\in \{1,\ldots,n\}$, with $j_1< j_2$, let $\hat{A}:=[\hat{a}_{j_1},\hat{a}_{j_2}]$ be the $m\times 2$ submatrix of $A$ formed by columns  $j_1$ and $j_2$.
Suppose that  $j_1,j_2$ are chosen to minimize the 1-norm of $\hat{H}:=\hat{A}^+$ among all $m\times 2$ rank-$2$ submatrices of $A$.
Suppose that the $n\times m$ matrix $H$ constructed by \cref{thm:ahconstruction} based on $\hat{A}$, is an ah-symmetric generalized inverse of $A$ with minimum 1-norm.
 Every column $\hat{b}$ of $A$, can be uniquely written in the basis $\hat{a}_{j_1},\hat{a}_{j_2}$, say
$\hat{b}=\alpha \hat{a}_{j_1} +\beta\hat{a}_{j_2}$.
Then for each such column $\hat{b}$ of $A$,
one of the following conditions holds on the associated $\alpha$, $\beta$:
\smallskip
\begin{enumerate}[label=(\roman*),leftmargin=2\parindent,itemsep=1ex]
\item $|\alpha|+|\beta|\le 1;$
\item $\hat{H}_{1j}\hat{H}_{2j}\le 0$ for $j=1,\dots,m$, and $\alpha\beta\ge 0$;
\item $\hat{H}_{1j}\hat{H}_{2j}\ge 0$ for $j=1,\dots,m$, and $\alpha\beta\le 0$.
\end{enumerate}
\end{theorem}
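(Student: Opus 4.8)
The plan is to obtain the conditions directly from linear-optimization duality and complementary slackness, rather than re-deriving the explicit certificate $(W,U)$ as in the proof of \cref{thmr2withP3}. Since $H$ is feasible for \eqref{eqn:Pah} and, by hypothesis, 1-norm minimizing, and since \eqref{eqn:Pah} is feasible and bounded below by $0$, strong duality supplies a dual optimal $(W,V)$; I set $U:=V^\top-V$ (skew-symmetric), $M:=WA^\top+U$, and $G:=A^\top WA^\top+A^\top U=A^\top M$, so that $\|G\|_{\max}\le 1$ by dual feasibility. Complementary slackness between the primal optimum $H$ and $(W,U)$ forces $G_{ik}=\mbox{sign}(H_{ik})$ at every entry with $H_{ik}\ne 0$. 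Because $H$ has its rows $j_1,j_2$ equal to $\hat{H}=\hat{A}^+$ and all other rows zero, this says precisely that $G_{j_1,k}=\mbox{sign}(\hat{H}_{1k})$ on the support of $\hat{H}_{1\cdot}$ and $G_{j_2,k}=\mbox{sign}(\hat{H}_{2k})$ on the support of $\hat{H}_{2\cdot}$.

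Next I would read off the combination identity. Writing $\hat{b}=\hat{a}_\ell$ for the column of $A$ in question, row $\ell$ of $G=A^\top M$ is $\hat{b}^\top M=\alpha\,\hat{a}_{j_1}^\top M+\beta\,\hat{a}_{j_2}^\top M=\alpha\,G_{j_1,\cdot}+\beta\,G_{j_2,\cdot}$, all of whose entries are bounded by $1$ in absolute value. Evaluating at any column $k$ for which both $\hat{H}_{1k}$ and $\hat{H}_{2k}$ are nonzero and substituting the signs forced by complementary slackness yields $|\alpha+\beta|\le 1$ when that column has $\hat{H}_{1k}\hat{H}_{2k}>0$, and $|\alpha-\beta|\le 1$ when $\hat{H}_{1k}\hat{H}_{2k}<0$. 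Letting $P^+$ (resp.\ $P^-$) denote the existence of a column of $\hat{H}$ with a strictly positive (resp.\ strictly negative) entrywise product, I thus obtain the implications $P^+\Rightarrow|\alpha+\beta|\le 1$ and $P^-\Rightarrow|\alpha-\beta|\le 1$.

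Finally I would close with a short case analysis on the pair $(P^+,P^-)$ together with the sign of $\alpha\beta$, using the elementary identity $\max\{|\alpha+\beta|,|\alpha-\beta|\}=|\alpha|+|\beta|$. If both $P^+$ and $P^-$ hold, the two bounds combine to $|\alpha|+|\beta|\le 1$, i.e.\ case \ref{p1}. If exactly $P^+$ holds, then no column has a negative product, so $\hat{H}_{1j}\hat{H}_{2j}\ge 0$ for all $j$; here $\alpha\beta\le 0$ gives case \ref{p3} outright, while $\alpha\beta>0$ forces $|\alpha|+|\beta|=|\alpha+\beta|\le 1$, i.e.\ case \ref{p1}. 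The case where only $P^-$ holds is symmetric, landing in \ref{p1} or \ref{p2}. If neither holds, every column of $\hat{H}$ satisfies $\hat{H}_{1j}\hat{H}_{2j}=0$, so both sign hypotheses of \ref{p2} and \ref{p3} are met and the sign of $\alpha\beta$ alone selects one of them. I expect the main obstacle to be the bookkeeping of these cases rather than any single hard estimate: the delicate point is to use the \emph{strict}-product definitions of $P^\pm$ so that complementary slackness applies exactly at the columns invoked (columns with a zero entry of $\hat{H}$ never need to be bounded, which is why they cause no trouble), and to deploy the $\max$-identity to settle the doubly-signed case. Notably, no separate appeal to the minimality of the basis $\{j_1,j_2\}$ (i.e.\ to \eqref{rell1}--\eqref{rell2}) is needed, since dual feasibility already delivers every inequality required.
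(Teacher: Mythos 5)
Your proposal is correct and takes essentially the same approach as the paper's own proof: both extract the sign identities $Z_{ik}=\mbox{sign}(\hat{H}_{ik})$ on the support of $\hat{H}$ from complementary slackness, use dual feasibility to bound $[\alpha~\beta]Z$ entrywise by $1$, and conclude with the same exhaustive case analysis on the sign pattern of the products $\hat{H}_{1j}\hat{H}_{2j}$ (your four cases on $(P^+,P^-)$ match the paper's four bullets exactly, including the observation that the minimality of $\hat{A}$ among submatrices is never invoked). The only cosmetic difference is your explicit use of the identity $\max\{|\alpha+\beta|,|\alpha-\beta|\}=|\alpha|+|\beta|$, which the paper leaves implicit.
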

\begin{proof}
We consider the dual pair of linear-optimization problems \eqref{eqn:Pah} and \eqref{eqn:Dah}. Because $H$ is an optimal solution to \eqref{eqn:Pah}, by the complementary slackness, we have
\begin{align*}
    &\langle J - A^\top W A ^\top - A^\top U , H^+ \rangle= 0,\\
    &\langle J + A^\top W A ^\top + A^\top U , H^- \rangle= 0,
\end{align*}
where $H^+=\max\{H,0\}$, $H^-=-\min\{H,0\}$, $W,U$ is an optimal solution to \eqref{eqn:Dah} with $U^\top=-U$. Along with $H^+,H^-\ge0$ and dual feasiblity, we have
\begin{align*}
    &(J - A^\top W A ^\top - A^\top U)_{ij} H_{ij}^+=0,\\
    &(J + A^\top W A ^\top + A^\top U)_{ij} H_{ij}^-= 0.
\end{align*}
Thus,
$$
(A^\top WA^\top +A^\top U)_{ij} = \begin{cases}
    1, & H_{ij} > 0,\\
    -1, & H_{ij} < 0,\\
    [-1,1], & H_{ij} = 0.
\end{cases}
$$
Without loss of generality, assume that $(j_1,j_2)=(1,2)$, and $H = [\hat{H};~0]$ with $\hat{H}\in\mathbb{R}^{2\times m}$. Let
$$(A^\top WA^\top +A^\top U)[\{1,2\},:] = \hat{A}^\top W A^\top + \hat{A}^\top U := Z.$$
For every column $\hat{b}$ of $A$, $\hat{b}=\alpha \hat{a}_{1} +\beta\hat{a}_{2}$ because $\mathrm{rank}(A) = 2$. Hence
$$\hat{b}^\top W A^\top + \hat{b}^\top U = [\alpha~\beta](\hat{A}^\top W A^\top + \hat{A}^\top U) = [\alpha~\beta]Z,$$
and we have $\|\hat{b}^\top W A^\top + \hat{b}^\top U\|_{\max}\le1$.
\begin{itemize}[leftmargin=1\parindent]
	\item If for any $j\in\{1,\dots,m\}$, one of $\hat{H}_{1j},\hat{H}_{2j}$ is zero, then $\hat{H}_{1j}\hat{H}_{2j}= 0$ for $j=1,\dots,m$, thus for any $\alpha,\beta$, either $\alpha\beta\ge0$ or $\alpha\beta\le 0$ holds.
    \item If $\hat{H}_{1j}\hat{H}_{2j}\le 0$ for $j=1,\dots,m$, and $\hat{H}_{1k}\hat{H}_{2k}< 0$ for some $k$, then $Z_{1k}=-Z_{2k}\ne0$, and $|\alpha-\beta|=|\alpha Z_{1k}+\beta Z_{2k}|\le 1$, thus $|\alpha|+|\beta|\le 1$ or $\alpha\beta\ge0$.
    \item If $\hat{H}_{1j}\hat{H}_{2j}\ge 0$ for $j=1,\dots,m$, and $\hat{H}_{1k}\hat{H}_{2k}> 0$ for some $k$, then $Z_{1k}=Z_{2k}\ne0$, and $|\alpha+\beta|=|\alpha Z_{1k}+\beta Z_{2k}|\le 1$, thus $|\alpha|+|\beta|\le 1$ or $\alpha\beta\le0$.
    \item Otherwise, we have both $|\alpha-\beta|\le 1$ and $|\alpha+\beta|\le 1$, which implies $|\alpha|+|\beta|\le 1$.
\end{itemize}
Hence $\alpha,\beta$ must satisfy one of \ref{p1}, \ref{p2} and \ref{p3}.
\qed
\end{proof}

\section{Conclusion and open questions}
When $A$ is symmetric, a 1-norm minimizing symmetric block solution is a 1-norm minimizing symmetric generalized inverse when (i) r = 1 and when (ii) r=2 and $A$ is nonnegative. A 1-norm minimizing column block solution is a 1-norm minimizing ah-symmetric generalized inverse when (i) r = 1 and when (ii) r=2 and $A$ satisfies a technical condition.
It would be interesting to investigate the approximation ratio achieved by a 1-norm 
minimizing symmetric block solution and column block solution for general $r$. 
Also characterizing optimality conditions beyond the cases that we studied is a nice challenge.

\begin{acknowledgement}
M. Fampa was supported in part by CNPq grant 303898/2016-0.
J. Lee was supported in part by ONR grant N00014-17-1-2296.
\end{acknowledgement}

\bibliographystyle{alpha}

\end{document}